\DocumentMetadata{
  pdfstandard = ua-2,
  lang        = en-US,
  tagging=on,
tagging-setup = {math/alt/use,
		  math/setup={mathml-AF,mathml-SE},
                   extra-modules={verbatim-mo},
                   table/header-rows=1}
}
\documentclass{article}
\usepackage{tikz}
\usepackage{pgfplots}
\usetikzlibrary{pgfplots.fillbetween}
\pgfplotsset{compat=1.18}
\definecolor{darkgreen}{RGB}{0,100,0}

\usepackage{fullpage} 
\usepackage{amsmath}
\usepackage{amsthm}
\usepackage{amssymb}
\usepackage{cite}
\usepackage[normalem]{ulem}

\usepackage[hidelinks]{hyperref}
\newtheorem{theorem}{Theorem}[section]
\newtheorem{corollary}[theorem]{Corollary}

\newtheorem{lemma}[theorem]{Lemma}

\newtheorem{claim}[theorem]{Claim}

\newcommand{\floor}[1]{\left\lfloor #1 \right\rfloor}

\let\epsilon\varepsilon

\tikzset{unlabeled_vertex/.style={inner sep=1.7pt, outer sep=0pt, circle, fill}}
\begin{document}

\title{Maximizing $K_r + I_r$ in graphs with fixed edge density
}
\author{
J\'ozsef Balogh~\thanks{Department of Mathematics, University of Illinois Urbana-Champaign, Urbana, IL, USA, and Extremal Combinatorics and Probability Group (ECOPRO), Institute for Basic Science (IBS), Daejeon, South Korea. Email: \texttt{jobal@illinois.edu}. Supported by NSF grants RTG DMS-1937241, FRG DMS-2152488, the Arnold O. Beckman Research Award (UIUC Campus Research Board RB 24012), the Simons Collaboration grant SFI-MPS-TSM-00013107, and the Institute for Basic Science (IBS-R029-C4).}
\and
Andrzej Grzesik~\thanks{
Faculty of Mathematics and Computer Science, Jagiellonian University, ul.~Prof.~St.~\L ojasiewicza 6, 30-348 Krak\'{o}w, Poland.
E-mail: \texttt{andrzej.grzesik@uj.edu.pl}.
Research of this author was partially supported by the National Science Centre grant 2021/42/E/ST1/00193.}
\and
Bernard Lidick\'{y}~\thanks{Department of Mathematics, Iowa State University, Ames, IA. E-mail: \texttt{lidicky@iastate.edu}. Research of this author was partially supported by NSF FRG DMS-2152490, DMS-2554129, the Simons Foundation TSM-00013439 and Scott Hanna Professorship.}
\and
Theodore Molla~\thanks{
Department of Mathematics and Statistics, University of
South Florida, Tampa, FL 33620, USA. E-mail: \texttt{molla@usf.edu}.
Research of this author was partially supported by NSF Award DMS-2154313.}
\and
Dhruv Mubayi~\thanks{
Department of Mathematics, Statistics and Computer Science, University of Illinois, Chicago, IL 60607. E-mail: \texttt{mubayi@uic.edu}. Research of this author was partially supported by NSF Awards DMS-2153576 and DMS-2552740.
}
\and
Jan Volec~\thanks{
Department of Theoretical Computer Science, Faculty of Information Technology,
Czech Technical University in Prague, Th\'akurova 9, Prague, 160 00, Czech Republic.
E-mail: \texttt{jan@ucw.cz}.
Research of this author was partially supported by the grant 23-06815M of the Grant Agency of the Czech
Republic.
}
}
\date{\today}

\maketitle

\begin{abstract}
For every integer $r\ge4$, and $\rho \in [0,1]$, we asymptotically determine the maximum proportion of $r$-element sets of vertices that induce either a clique or an independent set in a large graph with density $\rho$.  This generalizes a result of Olpp for $r=3$. 

After the initial idea for the main proof was found by the authors, various AI models were used to streamline the argument and perform the calculations necessary for completion of the proof.
\end{abstract}

MSC2020 05C35,  05C75

\section{Introduction}
 For a graph $G$, write $v(G)=|V(G)|$, $e(G) = |E(G)|$ and
$\rho(G)=e(G)/\binom{v(G)}{2}$ for its edge density.  For a graph $F$, let
$F(G)$ be the number of $v(F)$-subsets of $V(G)$ that induce a copy of $F$,
and write
\[
 t_{\mathrm{ind}}(F,G)=\frac{F(G)}{\binom{v(G)}{v(F)}}
\]
for the induced density of $F$ in $G$.  Denote by $K_r$  the complete graph on
$r$ vertices and $I_r$ the independent set on $r$ vertices.

A classical problem in extremal graph theory asks for the maximum (or minimum) proportion of copies of a  graph  (or a linear combination of some collection of graphs) in a large graph $G$ with given edge density. See~\cite{A, BP, DS, E, GNPV, N, Raz, R, lili} for some papers on these topics. 
For example,
  Goodman~\cite{Goodman} determined the minimum of $K_3(G)+I_3(G)$, and  he asked for
the maximum of $K_3(G)+I_3(G)$ among graphs $G$ with a prescribed number of edges.  Olpp~\cite{Olpp}
determined this maximum, showing that, up to the adjustments needed to obtain
the prescribed edge count, it is attained by the disjoint union of a clique
and isolated vertices or by its complement.  We prove the analogous
asymptotic result for $K_r(G)+I_r(G)$ for every fixed $r\ge3$.

\newcommand{\K}{L}
Let $\K=\K(n,\gamma)$ be the $n$-vertex graph consisting of a clique on
$\floor{\gamma n}$ vertices and $n-\floor{\gamma n}$ isolated vertices, and
let $\overline \K=\overline \K(n,\gamma)$ be its complement; see
Figure~\ref{fig:construction}.  The edge density of $L$ is
$\gamma^2+O(n^{-1})$, and of $\overline L$ is
$1-\gamma^2+O(n^{-1})$.

Moreover,
$K_r(\overline \K)+I_r(\overline \K)=K_r(\K)+I_r(\K)$ equals
\[
  \binom{\floor{\gamma n}}{r}+\binom{n-\floor{\gamma n}}{r}
  +\floor{\gamma n}\binom{n-\floor{\gamma n}}{r-1}
  \ge \left(\gamma^r+(1-\gamma)^r+r\gamma(1-\gamma)^{r-1}\right)\binom{n}{r}
  - O_r(n^{r-1}).
\]

We show that the asymptotic maximum of $K_r(G)+I_r(G)$ over all $n$-vertex
graphs $G$ with edge density $\rho+o(1)$ is attained by
$\K(n,\sqrt\rho)$ when $\rho>1/2$ and by
$\overline \K(n,\sqrt{1-\rho})$ when $\rho\le1/2$; see
Figure~\ref{fig:graph}. All asymptotic notation in the theorem statement below is taken as $n \rightarrow \infty$.

\begin{theorem}\label{thm:main}
Fix an integer $r\ge3$ and a real $\rho \in [0,1]$.
If $G_n$ is an $n$-vertex graph with edge density $\rho+o(1)$, then it holds that
\[
 \frac{K_r(G_n)+I_r(G_n)}{\binom nr}\le
 \gamma^r+(1-\gamma)^r+r\gamma(1-\gamma)^{r-1}
 +o(1),
 \quad\mbox{where }
 \gamma=\begin{cases}
    \sqrt{1-\rho} \quad\mbox{for $\rho\le \frac{1}{2} $, and}\\
    \sqrt\rho \quad \mbox{otherwise}.
 \end{cases}
\]
\end{theorem}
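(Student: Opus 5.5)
Since $K_r+I_r$ is invariant under complementation and $\rho\mapsto1-\rho$ swaps the two cases, it suffices to treat $\rho\ge 1/2$ with $\gamma=\sqrt\rho$. I will work in the limit (graphon) setting: by compactness it is enough to show that every graphon $W$ with edge density $\rho$ satisfies $t(K_r,W)+t(I_r,W)\le f(\gamma):=\gamma^r+(1-\gamma)^r+r\gamma(1-\gamma)^{r-1}$.

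\textbf{Reduction to degrees.} Let $d(x)=\int W(x,y)\,dy$. The key inequality I will try to prove is a degree-based bound: for every vertex $x$, the normalized number of $K_r$ containing $x$ is at most a function of $d(x)$, and similarly for $I_r$. Concretely, the link of $x$ has measure $d(x)$, and $K_r$ through $x$ correspond to $K_{r-1}$ in the link. By Kruskal--Katona (in the form of the Lovász version, $t(K_{r-1})\le t(K_2)^{(r-1)/2}$ for a graph of given edge density), the $K_{r-1}$ count in the link is at most roughly $e(\text{link})^{(r-1)/2}$. This suggests using Kruskal--Katona globally: $t(K_r)\le \rho^{r/2}=\gamma^r$ and $t(I_r)\le (1-\rho)^{r/2}$. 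Summing these is too weak in general, so the plan is instead to double count through a vertex-by-vertex bound. I will write
\[
t(K_r,W)+t(I_r,W)=\int \bigl(k_{r-1}(x)+i_{r-1}(x)\bigr)\,dx,
\]
where $k_{r-1}(x)$ counts $(r-1)$-cliques in $N(x)$ and $i_{r-1}(x)$ counts $(r-1)$-independent sets in the non-neighbourhood. The step is then to bound $k_{r-1}(x)\le d(x)^{r-1}\cdot \phi(\text{edge density inside }N(x))$ and similarly for $i_{r-1}$, and to relate the internal edge densities to the global data.

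\textbf{Alternative route via induction on $r$.} Since Olpp's result gives $r=3$, I would attempt induction: $K_r$ through $x$ equals $K_{r-1}$ in $N(x)$, and $I_r$ through $x$ equals $I_{r-1}$ in $\overline{N(x)}$. However the induction hypothesis applied to $G[N(x)]$ bounds $K_{r-1}+I_{r-1}$, not each separately, so one needs a mixed statement. I would therefore strengthen the claim to a two-parameter inequality: for a graphon on a set of measure $a$ with edge density $p$, $\alpha\, t(K_s)+\beta\, t(I_s)\le \max$ over the two-block (clique plus isolated, or complement) constructions, for all weights $\alpha,\beta\ge0$. This weighted version is natural for induction: $\alpha t(K_r)+\beta t(I_r)=\int \alpha d(x)^{r-1}t(K_{r-1},N(x)) + \beta(1-d(x))^{r-1}t(I_{r-1},\overline{N(x)})$, and the two terms live in different subgraphs, so each is bounded by Kruskal--Katona in terms of the internal densities. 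Then one uses $\int d(x)^2\ge\rho^2$-type constraints and the relation $\int d(x)\,p_{N(x)} d(x)^2 = $ (triangle density) to couple them.

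\textbf{Main obstacle.} The hardest step is the final optimization: after these local bounds one gets an expression in the degree distribution and internal densities which must be shown to be maximized when degrees take only two values (the clique/isolated split). I expect to handle it by showing the pointwise bound is convex in the appropriate variable so that extremal degree distributions are two-point, then reducing to a one-variable calculus inequality in $\gamma$ comparing $f(\gamma)$ with the alternative $1-\rho$ complement configuration; the threshold $\rho=1/2$ should emerge from comparing $f(\sqrt\rho)$ and $f(\sqrt{1-\rho})$. Verifying this final inequality for all $r\ge4$ is the tedious part, and I anticipate needing careful polynomial estimates (possibly computer-assisted checks for small $r$).
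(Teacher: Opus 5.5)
Your decomposition $t(K_r)+t(I_r)=\int\bigl(d(x)^{r-1}t(K_{r-1},N(x))+(1-d(x))^{r-1}t(I_{r-1},\overline{N(x)})\bigr)\,dx$ is correct. The gap is in the next step: the local inequality that both your routes rely on is strictly lossy at the extremal graph, so for $r\ge4$ it cannot give the sharp constant. You bound the $K_{r-1}$-density in $N(x)$ and the $I_{r-1}$-density in the non-neighbourhood separately by Kruskal--Katona. The inductive route does the same: the two terms live in different subgraphs, so the weighted induction hypothesis is never actually used.

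Take $W=L(n,\gamma)$ and an isolated vertex $x$. Its non-neighbourhood is everything, with non-edge density $1-\gamma^2$. Kruskal--Katona gives $(1-\gamma^2)^{(r-1)/2}$, but the true density of independent $(r-1)$-sets is $(1-\gamma)^{r-1}+(r-1)\gamma(1-\gamma)^{r-2}$. This is strictly smaller for $0<\gamma<1$, since equality in Kruskal--Katona needs the complement to be a clique plus isolated vertices. For $r=4$ the comparison reduces to $(1+\gamma)^3-(1-\gamma)(1+2\gamma)^2=3\gamma^2+5\gamma^3>0$. So your relaxed expression, evaluated on the local data of $L$ itself, equals
\[
g_r(\gamma)+(1-\gamma)\bigl[(1-\gamma^2)^{(r-1)/2}-(1-\gamma)^{r-1}-(r-1)\gamma(1-\gamma)^{r-2}\bigr]>g_r(\gamma),
\]
where $g_r$ is your $f$. $L$ satisfies any consistency constraints you later impose on degrees, internal densities or triangle counts, so no subsequent optimization of this relaxation can return $g_r(\gamma)$. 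Also, for $r\ge4$, $K_r+I_r$ is not a function of the degree distribution. Your planned convexity argument for two-point degree distributions therefore has no exact functional to act on.

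The missing idea is a shifting step that makes the counts exact. By Csikv\'ari's results, Kelmans transformations preserve the number of edges, do not decrease $K_r$ or $I_r$, and terminate in a threshold graph. In the limit this lets you restrict to nested $0$--$1$ step graphons, where $N(x)=[0,d(x))$. Counting cliques by their last point and independent sets by their first point then gives identities with no loss:
\[
\rho=2\int\min\{x,d(x)\}\,dx,\qquad t(K_r)=r\int\min\{x,d(x)\}^{r-1}dx,\qquad t(I_r)=r\int\min\{1-x,1-d(x)\}^{r-1}dx.
\]
The optimization then proceeds in stages.
\begin{enumerate}
\item If a maximizer had at least four canonical parts, reallocate the lengths of the first two and last two parts along $wy=C$, with $w+x$ and $y+z$ fixed. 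This preserves the density and makes the objective strictly convex, so no configuration with all four lengths positive can be a maximizer.
\item In the remaining three-part family, every critical point of the one-variable objective is a strict local minimum, which forces a two-part endpoint.
\item Only the last step is the one you anticipated: comparing $g_r(\sqrt e)$ with $g_r(\sqrt{1-e})$, which is where $\rho=1/2$ enters.
\end{enumerate}
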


\begin{figure}[h!]
\begin{center}
    \begin{tikzpicture}[alt={Independent set on (1-gamma) n vertices and a clique on gamma n vertices.}]
        \draw
        \foreach \i in {1,...,10}{
        (36*\i:1) node[unlabeled_vertex](x\i){}
        };
        \draw
        \foreach \j in {1,...,8}{
        (3,0)+(90-25+25*\j:1) node[unlabeled_vertex](y\j){}
        }
        ;
        \foreach \i in {2,...,8}{
          \pgfmathtruncatemacro{\imax}{\i-1}
          \foreach \j in {1,...,\imax}{
            \draw[opacity=0.3] (y\i)--(y\j);
          }
        }
    \draw (0,0) node[fill=white,rectangle, inner sep=1pt]{$(1 - \gamma)n$};
    \draw (3,0) node[fill=white,rectangle, inner sep=1pt]{$\gamma n$};
    \draw(1.5,-1) node[below]{(a)};
    \end{tikzpicture}
    \hskip 2cm
    \begin{tikzpicture}[alt={A complement of the other picture.}]
        \draw
        \foreach \i in {1,...,10}{
        (36*\i:1) node[unlabeled_vertex](x\i){}
        };
        \draw
        \foreach \j in {1,...,8}{
        (3,0)+(90-25+25*\j:1) node[unlabeled_vertex](y\j){}
        }
        ;
        \foreach \i in {1,...,10}{
            \foreach \j in {1,...,8}{
\draw[opacity=0.3] (x\i)--(y\j);
            }
        }
        \foreach \i in {2,...,10}{
          \pgfmathtruncatemacro{\imax}{\i-1}
          \foreach \j in {1,...,\imax}{
            \draw[opacity=0.3] (x\i)--(x\j);
          }
        }
    \draw (0,0) node[fill=white,rectangle, inner sep=1pt]{$(1 - \gamma)n$};
    \draw (3,0) node[fill=white,rectangle, inner sep=1pt]{$\gamma n$};
    \draw(1.5,-1) node[below]{(b)};
    \end{tikzpicture}
\end{center}
\caption{The extremal constructions: (a) the disjoint union of a clique of
size approximately $\gamma n$ and $(1-\gamma)n$ isolated vertices; (b) the complement of (a).}
\label{fig:construction}
\end{figure}

\begin{figure}
\begin{center}
\begin{tikzpicture}[alt={Plot of upper and lower bounds.},scale=0.8]
  \begin{axis}[
    xlabel={$\rho$},
    ylabel={$t_{\mathrm{ind}}(K_4)+t_{\mathrm{ind}}(I_4)$},
    ylabel style={
        at={(axis description cs:-0.12,0.6)},
        anchor=south,
    },
    ymin=0,ymax=1.0,
    grid=major,
    domain=0:1,
    xscale=1.5,
    yscale=0.8
  ]

    \addplot [name path=upper, blue, thick] coordinates {
      (0.000000,1.000000)
(.0100000000,   9.8010050e-01)
(.0200000000,   9.6040403e-01)
(.0300000000,   9.4091365e-01)
(.0400000000,   9.2163249e-01)
(.0500000000,   9.0256370e-01)
(.0600000000,   8.8371051e-01)
(.0700000000,   8.6507617e-01)
(.0800000000,   8.4666400e-01)
(.0900000000,   8.2847739e-01)
(.1000000000,   8.1051975e-01)
(.1100000000,   7.9279457e-01)
(.1200000000,   7.7530539e-01)
(.1300000000,   7.5805582e-01)
(.1400000000,   7.4104953e-01)
(.1500000000,   7.2429023e-01)
(.1600000000,   7.0778173e-01)
(.1700000000,   6.9152790e-01)
(.1800000000,   6.7553265e-01)
(.1900000000,   6.5980000e-01)
(.2000000000,   6.4433402e-01)
(.2100000000,   6.2913887e-01)
(.2200000000,   6.1421878e-01)
(.2300000000,   5.9957806e-01)
(.2400000000,   5.8522112e-01)
(.2500000000,   5.7115242e-01)
(.2600000000,   5.5737656e-01)
(.2700000000,   5.4389819e-01)
(.2800000000,   5.3072207e-01)
(.2900000000,   5.1785307e-01)
(.3000000000,   5.0529615e-01)
(.3100000000,   4.9305637e-01)
(.3200000000,   4.8113892e-01)
(.3300000000,   4.6954909e-01)
(.3400000000,   4.5829228e-01)
(.3500000000,   4.4737403e-01)
(.3600000000,   4.3680000e-01)
(.3700000000,   4.2657598e-01)
(.3800000000,   4.1670791e-01)
(.3900000000,   4.0720184e-01)
(.4000000000,   3.9806401e-01)
(.4100000000,   3.8930079e-01)
(.4200000000,   3.8091872e-01)
(.4300000000,   3.7292450e-01)
(.4400000000,   3.6532502e-01)
(.4500000000,   3.5812733e-01)
(.4600000000,   3.5133871e-01)
(.4700000000,   3.4496659e-01)
(.4800000000,   3.3901866e-01)
(.4900000000,   3.3350280e-01)
(.5000000000,   3.2842713e-01)
(.5100000000,   3.3350280e-01)
(.5200000000,   3.3901866e-01)
(.5300000000,   3.4496659e-01)
(.5400000000,   3.5133871e-01)
(.5500000000,   3.5812733e-01)
(.5600000000,   3.6532502e-01)
(.5700000000,   3.7292450e-01)
(.5800000000,   3.8091872e-01)
(.5900000000,   3.8930079e-01)
(.6000000000,   3.9806401e-01)
(.6100000000,   4.0720184e-01)
(.6200000000,   4.1670791e-01)
(.6300000000,   4.2657598e-01)
(.6400000000,   4.3680000e-01)
(.6500000000,   4.4737403e-01)
(.6600000000,   4.5829228e-01)
(.6700000000,   4.6954909e-01)
(.6800000000,   4.8113892e-01)
(.6900000000,   4.9305637e-01)
(.7000000000,   5.0529615e-01)
(.7100000000,   5.1785307e-01)
(.7200000000,   5.3072207e-01)
(.7300000000,   5.4389819e-01)
(.7400000000,   5.5737656e-01)
(.7500000000,   5.7115242e-01)
(.7600000000,   5.8522112e-01)
(.7700000000,   5.9957806e-01)
(.7800000000,   6.1421878e-01)
(.7900000000,   6.2913887e-01)
(.8000000000,   6.4433402e-01)
(.8100000000,   6.5980000e-01)
(.8200000000,   6.7553265e-01)
(.8300000000,   6.9152790e-01)
(.8400000000,   7.0778173e-01)
(.8500000000,   7.2429023e-01)
(.8600000000,   7.4104953e-01)
(.8700000000,   7.5805582e-01)
(.8800000000,   7.7530539e-01)
(.8900000000,   7.9279457e-01)
(.9000000000,   8.1051975e-01)
(.9100000000,   8.2847739e-01)
(.9200000000,   8.4666400e-01)
(.9300000000,   8.6507617e-01)
(.9400000000,   8.8371051e-01)
(.9500000000,   9.0256370e-01)
(.9600000000,   9.2163249e-01)
(.9700000000,   9.4091365e-01)
(.9800000000,   9.6040403e-01)
(.9900000000,   9.8010050e-01)
(1.0000000000,   1.0000000e+00)
    };
    \addplot [name path=lower, red, thick] coordinates {
      (0.000000,1.000000)
(.0100000000,  9.4109500e-01)
(.0200000000,  8.8436000e-01)
(.0300000000,  8.2976500e-01)
(.0400000000,  7.7728000e-01)
(.0500000000,  7.2687500e-01)
(.0600000000,  6.7852000e-01)
(.0700000000,  6.3218500e-01)
(.0800000000,  5.8784000e-01)
(.0900000000,  5.4545500e-01)
(.1000000000,  5.0500000e-01)
(.1100000000,  4.6644500e-01)
(.1200000000,  4.2976000e-01)
(.1300000000,  3.9491500e-01)
(.1400000000,  3.6188000e-01)
(.1500000000,  3.3062500e-01)
(.1600000000,  3.0112000e-01)
(.1700000000,  2.7333500e-01)
(.1800000000,  2.4724000e-01)
(.1900000000,  2.2280500e-01)
(.2000000000,  2.0000000e-01)
(.2100000000,  1.8001850e-01)
(.2200000000,  1.6084517e-01)
(.2300000000,  1.4259655e-01)
(.2400000000,  1.2539670e-01)
(.2500000000,  1.0937500e-01)
(.2600000000,  1.0086907e-01)
(.2700000000,  9.1841443e-02)
(.2800000000,  8.2614254e-02)
(.2900000000,  7.3317440e-02)
(.3000000000,  6.4131750e-02)
(.3100000000,  5.5253076e-02)
(.3200000000,  4.6887603e-02)
(.3300000000,  3.9324428e-02)
(.3400000000,  3.7000722e-02)
(.3500000000,  3.6785876e-02)
(.3600000000,  3.6427543e-02)
(.3700000000,  3.5963328e-02)
(.3800000000,  3.5400953e-02)
(.3900000000,  3.4741998e-02)
(.4000000000,  3.3932304e-02)
(.4100000000,  3.3031839e-02)
(.4200000000,  3.2272817e-02)
(.4300000000,  3.1615651e-02)
(.4400000000,  3.0979251e-02)
(.4500000000,  3.0385631e-02)
(.4600000000,  2.9848537e-02)
(.4700000000,  2.9461928e-02)
(.4800000000,  2.9102671e-02)
(.4900000000,  2.8890183e-02)
(.5000000000,  2.8933417e-02)
(.5100000000,  2.8890186e-02)
(.5200000000,  2.9102671e-02)
(.5300000000,  2.9461928e-02)
(.5400000000,  2.9848537e-02)
(.5500000000,  3.0385630e-02)
(.5600000000,  3.0979250e-02)
(.5700000000,  3.1615651e-02)
(.5800000000,  3.2272818e-02)
(.5900000000,  3.3031838e-02)
(.6000000000,  3.3932306e-02)
(.6100000000,  3.4741998e-02)
(.6200000000,  3.5400947e-02)
(.6300000000,  3.5963322e-02)
(.6400000000,  3.6427543e-02)
(.6500000000,  3.6785876e-02)
(.6600000000,  3.7000724e-02)
(.6700000000,  3.9324427e-02)
(.6800000000,  4.6887597e-02)
(.6900000000,  5.5253076e-02)
(.7000000000,  6.4131749e-02)
(.7100000000,  7.3317438e-02)
(.7200000000,  8.2614254e-02)
(.7300000000,  9.1841443e-02)
(.7400000000,  1.0086907e-01)
(.7500000000,  1.0937500e-01)
(.7600000000,  1.2539670e-01)
(.7700000000,  1.4259655e-01)
(.7800000000,  1.6084517e-01)
(.7900000000,  1.8001850e-01)
(.8000000000,  2.0000000e-01)
(.8100000000,  2.2280500e-01)
(.8200000000,  2.4724000e-01)
(.8300000000,  2.7333500e-01)
(.8400000000,  3.0112000e-01)
(.8500000000,  3.3062500e-01)
(.8600000000,  3.6188000e-01)
(.8700000000,  3.9491500e-01)
(.8800000000,  4.2976000e-01)
(.8900000000,  4.6644500e-01)
(.9000000000,  5.0500000e-01)
(.9100000000,  5.4545500e-01)
(.9200000000,  5.8784000e-01)
(.9300000000,  6.3218500e-01)
(.9400000000,  6.7852000e-01)
(.9500000000,  7.2687500e-01)
(.9600000000,  7.7728000e-01)
(.9700000000,  8.2976500e-01)
(.9800000000,  8.8436000e-01)
(.9900000000,  9.4109500e-01)
(1.0000000000,  1.0000000e+00)
    };

    \addplot [blue!50, fill opacity=0.3] fill between[of=upper and lower];

  \end{axis}
\end{tikzpicture}
\end{center}
\caption{
The extremal profile of
$t_{\mathrm{ind}}(K_4)+t_{\mathrm{ind}}(I_4)$.  The blue curve is the
asymptotic upper envelope proved in this paper; the red curve shows values suggested by
numerical experiments for the lower envelope, whose exact value is unknown.}
\label{fig:graph}
\end{figure}

{\bf Generative AI statement:}
The extremal example in Theorem~\ref{thm:main} has a two-part structure. Before using generative AI, the authors had a proof through the reduction to
a three-part structure, and the remaining step was to solve a constrained optimization problem involving a two-variable polynomial of degree $r$ (see Lemma~\ref{lem:maximizer3_domain} for the program).  Models including Gemini Pro and ChatGPT supplied
initial optimization arguments for completing this optimization and hence the three-part case; some
suggestions were incorrect and were discarded.  The final manuscript uses
AI-originated optimization arguments both for this final step and for a new
proof of the reduction from many parts to three.  Through iteration with the
authors, Prism, using GPT-5.6 Sol, helped refine the valid arguments and
exposition.  Some passages originated as AI-generated text, and much of the
manuscript received lighter AI-assisted editing.  The authors independently
verified and take full responsibility for all arguments and text.

\subsection{Tools}

Let $G$ be a graph, and let $u$ and $v$ be distinct vertices of $G$.  The \textit{Kelmans transformation of $G$ from $u$
to $v$} is the graph $G_{uv}$ obtained by deleting $ux$ and adding $vx$ for
every
\[
 x\in N(u)\setminus\bigl(N(v)\cup\{v\}\bigr);
\]
see~\cite{kelmans1981graphs} and Figure~\ref{fig:kelmans}. Note that for the transformation it does not matter if $uv$ was an edge or not.

\begin{figure}
\begin{center}
    \begin{tikzpicture}[alt={Before transformation}, xscale=0.8]
        \foreach \x in {1,2,...,8}{
\draw (\x,0) node(\x)[unlabeled_vertex]{};
        }
\draw
(3,1) node[unlabeled_vertex,label=above:$v$](v){}
(6,1) node[unlabeled_vertex,label=above:$u$](u){}
(v)--(1)
(v)--(2)
(v)--(3)
(v)--(4)
(v)--(5)
(v)--(6)
(u)--(4)
(u)--(5)
(u)--(6)
;
\draw
(7.5,0) ellipse(1cm and 0.2cm)
(8)node[below=5pt]{$X$}
;
\draw[line width=1.5pt,color=red]
(u)--(7)
(u)--(8)
;
\draw[dashed](u)--(v);
    \end{tikzpicture}
\hskip 1cm
    \begin{tikzpicture}[alt={After transformation}, xscale=0.8]
        \foreach \x in {1,2,...,8}{
\draw (\x,0) node(\x)[unlabeled_vertex]{};
        }
\draw
(3,1) node[unlabeled_vertex,label=above:$v$](v){}
(6,1) node[unlabeled_vertex,label=above:$u$](u){}
(v)--(1)
(v)--(2)
(v)--(3)
(v)--(4)
(v)--(5)
(v)--(6)
(u)--(4)
(u)--(5)
(u)--(6)
;
\draw
(7.5,0) ellipse(1cm and 0.2cm)
(8)node[below=5pt]{$X$}
;
\draw[line width=1.5pt,color=red]
(v)--(7)
(v)--(8)
;
\draw[dashed](u)--(v);
    \end{tikzpicture}
\end{center}
\caption{The Kelmans transformation from $u$ to $v$.}
\label{fig:kelmans}
\end{figure}

Csikv\'ari proved the following monotonicity result.
\begin{theorem}[Csikv\'ari~\cite{Csikvari2011}, Theorem 5.10]\label{thm:KrIr}
For every pair of distinct vertices $u,v\in V(G)$ and every integer $r\ge1$,
 the Kelmans transformation $G_{uv}$ of $G$ from $u$ to $v$ satisfies
\[
 K_r(G_{uv})\ge K_r(G)\qquad\qquad\text{and}\qquad\qquad
 I_r(G_{uv})\ge I_r(G).
\]
\end{theorem}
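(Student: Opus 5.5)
We need to prove Csikvári's statement that the Kelmans transformation does not decrease $K_r$ or $I_r$. We use an injection argument: compare $r$-sets $S$ according to how they meet $\{u,v\}$.

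\emph{Notation.} Write $N(u)$ and $N(v)$ for neighbourhoods in $G$, and let $X=N(u)\setminus(N(v)\cup\{v\})$ be the set of moved neighbours. In $G_{uv}$ we have $N'(v)=N(v)\cup X$ and $N'(u)=N(u)\setminus X$. All edges not incident to $u$ or $v$ are unchanged, and so is the pair $uv$.

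\emph{Cliques.} Sets $S$ with $u,v\notin S$ induce the same graph in $G$ and $G_{uv}$. Sets containing both $u$ and $v$ behave as follows. In $G$, $S$ is a clique iff $uv\in E$, $S\setminus\{u,v\}$ is a clique, and $S\setminus\{u,v\}\subseteq N(u)\cap N(v)$. In $G_{uv}$ the common neighbourhood is $N'(u)\cap N'(v)=(N(u)\setminus X)\cap(N(v)\cup X)=N(u)\cap N(v)$, which is unchanged. So these sets contribute equally.

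It remains to treat sets containing exactly one of $u,v$. Write $S=T\cup\{w\}$ with $w\in\{u,v\}$ and $T$ an $(r-1)$-set avoiding $u,v$, and fix $T$. Then $T\cup\{u\}$ and $T\cup\{v\}$ together contribute
\[
[T\text{ clique}]\cdot\bigl([T\subseteq N(u)]+[T\subseteq N(v)]\bigr)
\]
in $G$, and the same expression with $N'$ in $G_{uv}$. We check this count does not decrease. If $T\subseteq N(u)\cap N(v)$, then $T$ also lies in both new neighbourhoods, so the count stays $2$. If $T\subseteq N(v)$ only, then $T\subseteq N'(v)$, so the count stays at least $1$. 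If $T\subseteq N(u)$ but $T\not\subseteq N(v)$, then $T\subseteq N(u)\subseteq N(v)\cup X=N'(v)$, so the count is again at least $1$. Summing over $T$ gives $K_r(G_{uv})\ge K_r(G)$.

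\emph{Independent sets.} We use complements rather than repeating the computation. We claim $\overline{G_{uv}}$ is obtained from $\overline G$ by the Kelmans transformation from $u$ to $v$, up to the unchanged status of the pair $uv$.

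The vertices $x\ne u,v$ whose adjacency to $u$ or $v$ changes are exactly those of $X$: each such $x$ is adjacent to $u$ and not to $v$ in $G$. In $\overline G$, such $x$ is adjacent to $v$ and not to $u$, so $X=\overline N(v)\setminus(\overline N(u)\cup\{u\})$. In $\overline{G_{uv}}$, each $x\in X$ becomes adjacent to $u$ and non-adjacent to $v$. This is precisely the Kelmans transformation of $\overline G$ from $v$ to $u$.

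Since $I_r(G)=K_r(\overline G)$ and $I_r(G_{uv})=K_r(\overline{G_{uv}})=K_r(\overline G_{vu})$, the clique case applied to $\overline G$ with the roles of $u$ and $v$ swapped gives $I_r(G_{uv})\ge I_r(G)$.

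The only delicate step is this complement identification, in particular checking that the direction of the transformation is reversed. This is harmless, because the clique inequality holds for every ordered pair of vertices.
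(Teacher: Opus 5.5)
Your proof is correct. The paper does not prove this statement; it cites Csikv\'ari, so your argument is a self-contained replacement rather than a reconstruction of anything in the text.

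The core of your argument is this. On $V\setminus\{u,v\}$, the transformation replaces the pair of neighbourhoods $A=N(u)\setminus\{v\}$ and $B=N(v)\setminus\{u\}$ by $A\cap B$ and $A\cup B$. Indeed $N'(u)\setminus\{v\}=N(u)\cap N(v)$ and $N'(v)\setminus\{u\}=(N(u)\cup N(v))\setminus\{u,v\}$. Your three-case check is therefore the pointwise inequality
\[
[T\subseteq A]+[T\subseteq B]\le[T\subseteq A\cap B]+[T\subseteq A\cup B],
\]
summed over cliques $T$. Sets meeting $\{u,v\}$ in zero or two vertices are unaffected. The exact identity $\overline{G_{uv}}=(\overline G)_{vu}$ then transfers the bound to $I_r$.

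What each approach buys: your route is an elementary verification of exactly what the paper needs for Corollary~\ref{cor:threshold}. The citation costs the paper nothing and places the fact within Csikv\'ari's broader study of the Kelmans transformation.

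Three small slips, none affecting correctness:
\begin{itemize}
\item The inclusion $N(u)\subseteq N(v)\cup X$ fails when $uv\in E(G)$, because then $v\in N(u)$. What you need is $N(u)\setminus\{v\}\subseteq N(v)\cup X$, which suffices since $T$ avoids $v$.
\item In the case $T\subseteq N(u)\cap N(v)$, the claim $T\subseteq N'(u)$ needs $T\cap X=\emptyset$. This follows from $X\cap N(v)=\emptyset$ and deserves a word.
\item The complement identification holds exactly; no qualification about the pair $uv$ is needed.
\end{itemize}
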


A \textit{threshold graph} is a graph for which a vertex ordering $v_1, \dotsc, v_n$ with $d(v_1) \ge d(v_2) \ge  \ldots \ge d(v_n)$ satisfies
\[
N(v_j)\setminus\{v_i\}\subseteq N(v_i)\setminus\{v_j\}
\]
for every $1 \le i < j \le n$.

\begin{theorem}[Csikv\'ari~\cite{Csikvari2011}, Theorem 2.6(a)]\label{thm:order}
Every graph can be transformed into a threshold graph by a sequence of
Kelmans transformations.
\end{theorem}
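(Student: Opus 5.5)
The plan is a potential-function argument. Among all graphs obtainable from $G$ by Kelmans transformations, we repeatedly apply a transformation that strictly increases a bounded integer quantity. When no such transformation exists, the graph will turn out to be threshold.

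\textbf{Step 1: the potential increases.} Fix distinct $u,v$ and put $X=N(u)\setminus\bigl(N(v)\cup\{v\}\bigr)$, with $k=|X|$. In $G_{uv}$ the edges $ux$ for $x\in X$ are replaced by $vx$, and since $X\cap N(v)=\emptyset$ no edge is lost. Hence $e(G_{uv})=e(G)$, the degree of $v$ rises by $k$, the degree of $u$ drops by $k$, and all other degrees are unchanged. Consider the potential $\Phi(G)=\sum_w d(w)^2$. Its change is
\[
(d(v)+k)^2+(d(u)-k)^2-d(v)^2-d(u)^2=2k\bigl(d(v)-d(u)\bigr)+2k^2 .
\]
This is strictly positive whenever $k\ge1$ and $d(v)\ge d(u)$. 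So whenever some pair has $k\ge 1$ and we transform toward the vertex of larger (or equal) degree, $\Phi$ strictly increases. Since $\Phi\le n(n-1)^2$ is an integer, the process stops after finitely many steps.

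\textbf{Step 2: the terminal graph is nested.} At termination, every pair $u,v$ with $d(v)\ge d(u)$ satisfies $N(u)\setminus(N(v)\cup\{v\})=\emptyset$, that is, $N(u)\setminus\{v\}\subseteq N(v)\setminus\{u\}$. Here we use that $u\notin N(u)$, so removing $u$ from the right-hand side costs nothing.

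\textbf{Step 3: the threshold ordering.} Order the vertices as $v_1,\dots,v_n$ by nonincreasing degree, breaking ties arbitrarily. For $i<j$ we have $d(v_i)\ge d(v_j)$, so Step 2 applied with $v=v_i$ and $u=v_j$ gives $N(v_j)\setminus\{v_i\}\subseteq N(v_i)\setminus\{v_j\}$. This is exactly the definition of a threshold graph.

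The only real subtlety is the choice of direction in Step 1. One must always transform toward the vertex of larger (or equal) degree, so that $\Phi$ increases for every $k\ge 1$, including ties in degree. With that choice, termination and the containment property needed in Step 3 follow immediately.
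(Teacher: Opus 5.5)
Your argument is correct and complete. The Kelmans step from $u$ to $v$ moves exactly $k=|X|$ edges from $u$ to $v$ without creating duplicates, and your formula for the change in $\sum_w d(w)^2$ is right. The terminal condition you obtain is that $N(u)\setminus\{v\}\subseteq N(v)\setminus\{u\}$ whenever $d(v)\ge d(u)$. This gives the containment in the paper's definition for \emph{every} degree-nonincreasing ordering, so it does not matter whether that definition is read with ``some'' or ``every'' such ordering.

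The paper itself gives no proof of this statement; it simply imports it from Csikv\'ari. Your potential-function argument therefore makes this ingredient of Corollary~\ref{cor:threshold} self-contained.

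One remark sharpens the ``subtlety'' you mention. Write $X'=N(v)\setminus(N(u)\cup\{u\})$. Then $d(v)-d(u)=|X'|-|X|$, so your increment $2k(d(v)-d(u))+2k^2$ equals $2|X|\,|X'|$, which is symmetric in the direction of the transformation. It vanishes exactly when one of $N(u)\setminus\{v\}$ and $N(v)\setminus\{u\}$ contains the other. In that case the transformation either does nothing or merely exchanges the roles of $u$ and $v$. So your rule of transforming toward the vertex of larger degree is just a convenient way to pick pairs whose punctured neighborhoods are incomparable. Equivalently, the process reads: while some pair has incomparable punctured neighborhoods, apply a Kelmans transformation to it.
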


Since each Kelmans transformation preserves the number of edges,
Theorems~\ref{thm:KrIr} and~\ref{thm:order} give the following.
\begin{corollary}\label{cor:threshold}
For all integers $r,m,n$ with $r,n\ge1$ and $0\le m\le\binom n2$, there is a
threshold graph $G$ on $n$ vertices with $m$ edges that maximizes
$K_r(G)+I_r(G)$ among all such graphs.
\end{corollary}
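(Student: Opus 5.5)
The statement to prove is Corollary~\ref{cor:threshold}. The argument is a finiteness-plus-monotonicity argument combining Theorems~\ref{thm:KrIr} and~\ref{thm:order}.

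Fix $r,m,n$. The set of graphs on the vertex set $[n]$ with exactly $m$ edges is finite and nonempty, since $0\le m\le\binom n2$. Choose a graph $G_0$ in this set that maximizes $K_r(G_0)+I_r(G_0)$.

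By Theorem~\ref{thm:order}, there is a sequence $G_0,G_1,\dots,G_k$ in which each $G_{i+1}$ is a Kelmans transformation of $G_i$ from some $u_i$ to some $v_i$, and $G_k$ is a threshold graph.

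Next I check that each step keeps $G_{i+1}$ in the same family. Each $x\in N(u)\setminus(N(v)\cup\{v\})$ loses the edge $ux$ and gains the edge $vx$. The edge $vx$ was absent because $x\notin N(v)$, and it is not a loop because $x\ne v$. Also $x\ne u$, since $u\notin N(u)$. So the deleted and added edge sets have equal size, and $e(G_{i+1})=e(G_i)=m$ on the same $n$ vertices.

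By Theorem~\ref{thm:KrIr}, $K_r(G_{i+1})+I_r(G_{i+1})\ge K_r(G_i)+I_r(G_i)$ for every $i$. Chaining these inequalities gives $K_r(G_k)+I_r(G_k)\ge K_r(G_0)+I_r(G_0)$. Since $G_0$ was a maximizer, equality holds, so $G_k$ is a threshold maximizer.

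The only point needing care is that a Kelmans transformation preserves both the vertex set and the edge count. This follows directly from the definition, as checked above, so there is no substantial obstacle.
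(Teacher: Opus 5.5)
Your proof is correct and follows the paper's own one-line argument: start from a maximizer, apply the sequence of Kelmans transformations given by Theorem~\ref{thm:order} to reach a threshold graph, and use edge-count preservation together with Theorem~\ref{thm:KrIr} to see that the objective does not decrease. Your explicit check that each transformation preserves the vertex set and the number of edges spells out the justification the paper states in a single clause.
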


We use graphons and step graphons in our proofs; for further background and
the fundamental results of graph limit theory, see~\cite{lovasz2012large}.
A {\it graphon} is a symmetric measurable function
$W\colon[0,1]^2\to[0,1]$.  For $r\ge1$, we write
$t_{\mathrm{ind}}(K_r,W)$ and $t_{\mathrm{ind}}(I_r,W)$ for its induced
clique and independent-set densities, respectively.  Set
\[
 \Psi_r(W)=t_{\mathrm{ind}}(K_r,W)+t_{\mathrm{ind}}(I_r,W),
 \qquad\qquad
 \rho(W)=\int_{[0,1]^2}W(x,y)\,\mathrm{d}x\,\mathrm{d}y.
\]
Here $\rho(W)$ is the edge density of $W$.
A graphon $W$ is called a  {\it step graphon} if   there is a finite measurable partition
$[0,1]=P_1\cup\cdots\cup P_s$ such that $W$ is constant on every
$P_i\times P_j$.  The sets $P_i$ are called its {\it parts}.  A $0$--$1$ step
graphon is a step graphon whose block values all belong to $\{0,1\}$.
For a graph $G$ on the ordered vertex set $\{1,\ldots,n\}$, its associated
step graphon is obtained by taking $n$ equal consecutive parts and assigning
the value $1$ to $P_i\times P_j$ exactly when $ij\in E(G)$; its diagonal
blocks are assigned the value $0$.
We call $W$ \emph{nested} if it has a representative satisfying
\[
 W(x,z)\ge W(y,z)
 \quad\text{for almost every }(x,y,z)\text{ with }x\le y.
\]
For a nested step graphon, we always use the ordered representative in which
the sets of points having identical rows are consecutive intervals.  By
symmetry, every row of a nested $0$--$1$ step graphon is then the indicator
of an initial interval, up to a null set.  We call the resulting symmetric
monotone $0$--$1$ block pattern its \emph{staircase pattern}.

\section{Proof of Theorem~\ref{thm:main}}

Fix an integer $r\ge3$, and let
\[
 g_r(t):=t^r+(1-t)^r+rt(1-t)^{r-1}\qquad\mbox{and}\qquad
 M_r(\rho):=g_r\!\left(\sqrt{\max\{\rho,1-\rho\}}\right).
\]
Suppose, for the sake of contradiction, that there are $\eta>0$ and, for arbitrarily large integers $k$,
$k$-vertex graphs $G_k$ such that
\[
 \frac{K_r(G_k)+I_r(G_k)}{\binom kr}
 >M_r(\sigma_k)+\eta,
 \qquad \sigma_k=\rho(G_k)=\frac{e(G_k)}{\binom k2}.
\]
By Corollary~\ref{cor:threshold}, we may assume that  each $G_k$ is a threshold
graph and its  vertices are ordered so that  their degrees are non-increasing.  For the associated step graphon,
the  ordering implies that its diagonal blocks can be set to $1$
on an initial segment and to $0$ thereafter so that every column is
non-increasing.  The resulting nested
$0$--$1$ step graphon $V_k$ has at
most $k$ parts and differs from the associated graphon only on a set of area
at most $1/k$.  Put
$\rho_k=\rho(V_k)$.  The alteration of the diagonal blocks, together with
the difference between sampling vertices with and without replacement, gives
\[
 \rho_k=\sigma_k+O(1/k),\qquad
 \Psi_r(V_k)=\frac{K_r(G_k)+I_r(G_k)}{\binom kr}+O_r(1/k).
\]
Since $M_r$ is continuous on $[0,1]$, it follows that, for all sufficiently
large $k$ under consideration,
\[
 \Psi_r(V_k)>M_r(\rho_k)+\frac\eta2.
\]
For each such $k$, choose a
nested $0$--$1$ step graphon $W_k$ with at most $k$ parts and edge density
$\rho_k$ that maximizes $\Psi_r$ in this finite-dimensional class.  Such a
maximizer exists because there are finitely many staircase patterns and,
allowing zero-length parts, each pattern has a compact set of admissible block
lengths on which $\Psi_r$ is continuous.  Since $V_k$
belongs to this class,
\[
 \Psi_r(W_k)\ge\Psi_r(V_k)>M_r(\rho_k)+\frac\eta2.
\]
We will prove below that every such maximizer satisfies
$\Psi_r(W_k)\le M_r(\rho_k)$, which is the desired contradiction.

For $x\in[0,1]$, let
\[
 d_k(x)=\int_0^1W_k(x,y)\,\mathrm{d}y
\]
be its graphon degree.  In the ordered nested representative, the neighbors
of $x$ form the initial interval $[0,d_k(x))$, up to a null set.  Thus the
measure of its neighbors preceding it is $\min\{x,d_k(x)\}$.  Counting each
edge by its later endpoint and recalling that graphon edge density counts
ordered pairs gives
\begin{equation}\label{eq:edgedensity}
\rho_k=2\int_0^1\min\{x,d_k(x)\}\,\mathrm{d}x.
\end{equation}
Similarly, an $r$-clique is counted by its last point and an independent
$r$-set by its first point.  Nestedness ensures that the other $r-1$ points
form a clique or an independent set, respectively.  Therefore
\[
 t_{\mathrm{ind}}(K_r,W_k)
 =r\int_0^1\min\{x^{r-1},d_k(x)^{r-1}\}\,\mathrm{d}x
\]
and
\[
 t_{\mathrm{ind}}(I_r,W_k)
 =r\int_0^1\min\{(1-x)^{r-1},(1-d_k(x))^{r-1}\}\,\mathrm{d}x.
\]
Consequently, subject
to~\eqref{eq:edgedensity}, $W_k$ maximizes
\begin{equation}\label{eq:KrIrdensity}
r\int_0^1\min\{x^{r-1},d_k(x)^{r-1}\}\,\mathrm{d}x
+r\int_0^1\min\{(1-x)^{r-1},(1-d_k(x))^{r-1}\}\,\mathrm{d}x.
\end{equation}

After merging consecutive intervals with identical rows, call the resulting
intervals the canonical parts.  Suppose that $W_k$ has $s\ge4$ such parts.
If its last row is nonzero, then that row begins with a $1$; symmetry and
nestedness force the first row to be identically $1$.  Thus either $W_k$
already has a zero last row or its complement followed by order reversal does.
Let $\widehat W_k$ be the resulting graphon, and let $\widehat\rho_k$ be its
edge density.  Then
\[
 \widehat\rho_k\in\{\rho_k,1-\rho_k\},\qquad
 \Psi_r(\widehat W_k)=\Psi_r(W_k).
\]
Moreover, $\widehat W_k$ is a maximizer in the corresponding class at density
$\widehat\rho_k$, and formulas~\eqref{eq:edgedensity}--\eqref{eq:KrIrdensity}
apply with $W_k,\rho_k$ replaced by $\widehat W_k,\widehat\rho_k$.

For $i\in\{1,\ldots,s\}$, let $t_i$ be the number of canonical parts on
which the $i$th row of $\widehat W_k$ equals $1$.  Nestedness and distinctness
give
\[
 t_1>t_2>\cdots>t_s=0.
\]
Since the last row is zero, symmetry makes the first row zero on the last
part, so $t_1\le s-1$.  Hence necessarily $t_i=s-i$ for every $i$.  In
particular, the first column is $1$ except on the last part, and the second
column is $1$ except on the last two parts.  Let $a,b$ be the lengths of the
first two parts and $c,d$ the lengths of the last two.  These lengths are
positive and $a+b+c+d\le1$.

With these parameters, the four forced outer blocks are
\[\widehat W_k(x,y) = \begin{cases}
1 \quad \quad\quad& \text{if } x \in [0,1-d) \text{ and } y \in [0,a), \\
0 & \text{if } x \in [1-d,1] \text{ and } y \in [0,a), \\
1 & \text{if } x \in [0,1-c-d) \text{ and } y \in [a,a+b), \\
0 & \text{if } x \in [1-c-d,1] \text{ and } y \in [a,a+b). \\
\end{cases}\]
\begin{figure}[ht]
\begin{center}
  \includegraphics[alt={A graphon with adjacencies of first 2 blocks a,b and last two blocks c,d indicated but the rest is not forced.}]{graphon1}
\caption{The forced corner blocks of the ordered nested graphon
$\widehat W_k$.}
\end{center}
\end{figure}

The contribution of the vertices
$x\in[0,a+b)\cup[1-c-d,1]$ to the right-hand side
of~\eqref{eq:edgedensity} is $(a+b)^2+2ac$, while their contribution
to~\eqref{eq:KrIrdensity} is
\begin{equation}\label{eq:KrIrcontribution}
(a+b)^r+rca^{r-1}+rad^{r-1}+rb(c+d)^{r-1}
+(c+d)^r.
\end{equation}

Let
\begin{equation*}
f(w,x,y,z):=(w+x)^r+ryw^{r-1}+rwz^{r-1}
+rx(y+z)^{r-1}+(y+z)^r.
\end{equation*}
For any non-negative quadruple $(w,x,y,z)$ satisfying
\[
 w+x=a+b,\qquad y+z=c+d,
 \qquad (w+x)^2+2wy=(a+b)^2+2ac,
\]
retain the same staircase pattern and all intervening parts, but give the
first two and last two parts lengths $w,x,y,z$.  The first two equalities keep
the total outer length, and hence all intervening parts, fixed.  The third
equality preserves the outer contribution to the edge density.  All other
terms in~\eqref{eq:KrIrdensity} are unchanged, while
equation~\eqref{eq:KrIrcontribution} shows that the varying part is exactly
$f$.
Thus every feasible quadruple is realized by an admissible nested step
graphon with the same total edge density.

Because $\widehat W_k$ maximizes~\eqref{eq:KrIrdensity} subject
to~\eqref{eq:edgedensity}, the tuple $(a,b,c,d)$ maximizes $f$ over all such
quadruples.

Put
\[
 p=a+b,\qquad q=c+d,\qquad \alpha=p^2+2ac.
\]
Since $a,b,c,d>0$ and $p+q\le1$, we have $p,q\in(0,1)$.  Moreover,
$a<p$ and, because $d>0$, $c<q\le1-p$.  Therefore
\[
 p^2<\alpha<p^2+2p(1-p)=1-(1-p)^2<1.
\]
Thus all the domain assumptions of Lemma~\ref{lem:maximizer4} are satisfied.

Lemma~\ref{lem:maximizer4} now implies that at least one coordinate of the
maximizer $(a,b,c,d)$ is zero, contradicting $a,b,c,d>0$.

Therefore, every $W_k$ has at most three canonical parts.  Each $W_k$ has one
of the two structures depicted in Figure~\ref{fig:graphonW}, with parameters
$p,q\ge0$ satisfying $p+q\le1$.

\begin{figure}[ht]
\begin{center}
\includegraphics[alt={A step graphon 3x3 with the right bottom under diagonal empty and rest full. Blocks have sizes p, q, 1-p-q.}]{graphon4}\hspace{15mm}
\includegraphics[alt={A step graphon 3x3 with the left top above diagonal filled and rest empty. Blocks have sizes 1-p-q, q, p.}]{graphon3}
\caption{The possible structures of a maximizing graphon $W_k$.}\label{fig:graphonW}
\end{center}
\end{figure}

For both structures we have
\[
 \Psi_r(W_k)=(p+q)^r+r(1-p-q)p^{r-1}
 +rq(1-p-q)^{r-1}+(1-p-q)^r,
\]
and
\[
 e=(p+q)^2+2p(1-p-q)
\]
is either $\rho_k$ or $1-\rho_k$.

If $e\in\{0,1\}$, then $W_k$ is respectively empty or complete up to
complementation, and $\Psi_r(W_k)=1=M_r(\rho_k)$.  Suppose that $0<e<1$.
 Then Lemma~\ref{lem:maximizer3_domain} gives
\[
 \Psi_r(W_k)\le
 \max_{\gamma\in\{\sqrt e,\sqrt{1-e}\}}g_r(\gamma).
\]
The two possible values of $\gamma$ have squares summing to $1$.
Lemma~\ref{lem:maximizer3_value} therefore shows that their maximum is
attained at $\gamma=\sqrt{\max\{e,1-e\}}$.  Since
$\{e,1-e\}=\{\rho_k,1-\rho_k\}$, we obtain
\[
 \Psi_r(W_k)\le M_r(\rho_k).
\]
This contradicts the strict inequality
$\Psi_r(W_k)>M_r(\rho_k)+\eta/2$ obtained above.  Hence the uniform upper
bound holds, completing the proof of Theorem~\ref{thm:main}.

\section{Auxiliary optimization (AI-assisted proofs)}

The proofs in this section contain the optimization arguments.  Section~\ref{aicomp}
contains the differentiations and algebraic identities used here, separating
those routine computations from the sign, compactness, and endpoint arguments.

\begin{lemma}\label{lem:maximizer4}
  Fix an integer $r \ge 3$ and three positive reals $p,q$ and $\alpha$ that satisfy the conditions
  $p+q\le1$ and $p^2 < \alpha < 1-(1-p)^2$, and let
  \[f(w,x,y,z) := (w+x)^r + ryw^{r-1} + rwz^{r-1} + rx(y+z)^{r-1} + (y+z)^r.\]
  If $(w^*, x^*, y^*, z^*)$ is a global maximizer of the program
  \[
  (P)=\begin{cases}
  \text{maximize} &
  f(w,x,y,z)
  \\
  \text{subject to} &
  w+x=p \\
  & y+z=q \\
  & (w+x)^2+2wy=\alpha \\
  &(w,x,y,z) \geq 0,
  \end{cases}
  \]
  then at least one of $w^*$, $x^*$, $y^*$, and $z^*$ is equal to zero.
\end{lemma}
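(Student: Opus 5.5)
The plan is to eliminate the constraints and reduce $(P)$ to a one-variable problem, then show that the resulting objective is strictly convex, so that every global maximizer lies at an endpoint of the feasible interval. An endpoint is exactly a point where $x$ or $z$ vanishes.

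\textbf{Reduction to one variable.} Put $\beta:=(\alpha-p^2)/2$. The hypothesis $\alpha>p^2$ gives $\beta>0$. Since $w+x=p$, the third constraint is equivalent to $wy=\beta$. Any feasible point therefore has $w>0$ and $y=\beta/w$, and then
\[
x=p-w,\qquad z=q-\beta/w .
\]
The nonnegativity constraints become $w\le p$ and $w\ge \beta/q$. So the feasible set of $(P)$ is the image of the interval $J=[\beta/q,\,p]$ under this parametrization. It is nonempty, because the statement presupposes a global maximizer.

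At the left endpoint $w=\beta/q$ we have $z=0$, and at the right endpoint $w=p$ we have $x=0$. At any interior point of $J$, all four coordinates are strictly positive. So it suffices to show that the function
\[
h(w):=f\bigl(w,\,p-w,\,\beta/w,\,q-\beta/w\bigr)
= p^r+q^r+r\beta w^{r-2}+r(p-w)q^{r-1}+r\,w\Bigl(q-\frac{\beta}{w}\Bigr)^{r-1}
\]
attains its maximum over $J$ only at an endpoint of $J$.

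\textbf{Convexity.} We check the second derivative of $h$ term by term on the interior of $J$.
\begin{itemize}
\item The terms $p^r+q^r$ and $r(p-w)q^{r-1}$ are affine in $w$.
\item The term $r\beta w^{r-2}$ is convex for $r\ge3$, since $w>0$.
\item For the last term, let $s(w)=q-\beta/w$, so $s'=\beta/w^2$ and $s''=-2\beta/w^3$. With $\varphi(w)=w\,s^{r-1}$ one computes $\varphi'=s^{r-1}+(r-1)s^{r-2}\beta/w$. Differentiating again, the two terms $\pm(r-1)s^{r-2}\beta/w^2$ cancel, leaving
\[
\varphi''(w)=(r-1)(r-2)\,s^{r-3}\,\frac{\beta^2}{w^3}.
\]
\end{itemize}
On the interior of $J$ we have $s>0$, $w>0$, $\beta>0$ and $r\ge3$, so $\varphi''>0$. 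Hence $h$ is strictly convex on $J$ (continuous on $J$, with $h''>0$ on its interior). When $r=3$, the factor $s^{r-3}=1$ causes no trouble.

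\textbf{Conclusion.} A strictly convex function on a compact interval attains its maximum only at the endpoints. So if $J$ is a nondegenerate interval, no interior $w$ can give a global maximizer. If $J$ is a single point, that point is itself an endpoint. In either case any global maximizer has $x^*=0$ or $z^*=0$.

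The only delicate step is the second-derivative computation for $\varphi$ together with the check that the interior of $J$ corresponds exactly to strictly positive coordinates. Both are routine once the substitution $y=\beta/w$ is made.
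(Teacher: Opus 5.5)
Your proposal is correct and is essentially the paper's own proof: the same elimination $y=\beta/w$ (the paper's $C$) reduces $(P)$ to a one-variable function on $[\beta/q,p]$, and the same second-derivative computation (the paper's Lemma~\ref{lem:calc_four_block}) shows this function is strictly convex, so no interior point, where all four coordinates are positive, can be a maximizer. The only cosmetic difference is that the paper argues by contradiction on the open interval $(C/q,p)$, whereas you argue directly on the closed interval and also treat the degenerate one-point case explicitly.
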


\begin{proof}
Let $C=(\alpha-p^2)/2$.  On the constraint set we have
\[
 x=p-w,\qquad z=q-y,\qquad C= (\alpha-p^2)/2= \left((w+x)^2+2wy -(w+x)^2\right)/2= wy.
\]
Suppose for contradiction  that all four coordinates of a maximizer are positive.  Then
$0<C=wy<pq$, and
\[
 (w,x,y,z)=\left(w,p-w,\frac Cw,q-\frac Cw\right),
 \qquad \frac Cq < w < p,
\]
since the values $w=C/q$ and $w=p$ correspond to $z=0$ and $x=0$, respectively.
On this curve, the objective is to maximize
\[
 f\left(w,p-w,\frac Cw,q-\frac Cw\right)
=p^r + r\cdot\frac{C}{w}\cdot w^{r-1} + rw\left(q-\frac Cw\right)^{r-1} + r(p-w)q^{r-1} + q^r =p^r+q^r+rpq^{r-1}+rh(w),
\]
where
\[
h(w)=Cw^{r-2}+w\left(q-\frac Cw\right)^{r-1}-wq^{r-1}.
\]
Thus, maximizing the objective is equivalent to maximizing $h(w)$ on the open interval $p > w > C/q$.
However, Lemma~\ref{lem:calc_four_block} shows that $h$ is strictly convex in this interval,
so it cannot attain its maximum there; a contradiction.
\end{proof}

\begin{lemma}\label{lem:maximizer3_domain}
  Fix an integer $r \ge 3$ and a real $e \in(0,1)$, and let
  \[F(p,q) := (p+q)^r + r(1-p-q)p^{r-1} + rq(1-p-q)^{r-1} + (1-p-q)^r.\]
  If $(p^*,q^*)$ is a global maximizer of the program
  \[
  (P)=\begin{cases}
  \text{maximize} &
  F(p,q)
  \\
  \text{subject to} &
  p+q\le1 \\
  & (p+q)^2+2p(1-p-q)=e \\
  &(p,q) \geq 0,
  \end{cases}
  \]
  then $(p^*,q^*) \in \left\{ (0,\sqrt e), (1-\sqrt{1-e},0)\right\}$.
\end{lemma}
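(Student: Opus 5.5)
The plan is to reduce the program to a one-variable problem along the constraint curve and then show that no interior point of the curve can be a maximizer. Put $s=p+q$ and $u=1-s$. The constraint $(p+q)^2+2p(1-p-q)=e$ becomes $s^2+2pu=e$. On the feasible set $u>0$ (if $s=1$ the constraint forces $e=1$), so
\[
 p=\frac{e-s^2}{2(1-s)},\qquad q=s-p .
\]
The conditions $p\ge0$ and $q\ge0$ translate into $s^2\le e$ and $e\le 2s-s^2=1-(1-s)^2$. Hence the feasible set is exactly the arc parametrized by
\[
 s\in I:=\bigl[\,1-\sqrt{1-e},\ \sqrt e\,\bigr].
\]
This interval is nonempty because $\sqrt e+\sqrt{1-e}\ge1$. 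The endpoint $s=\sqrt e$ gives $p=0$, i.e. $(p,q)=(0,\sqrt e)$. The endpoint $s=1-\sqrt{1-e}$ gives $q=0$, i.e. $(p,q)=(1-\sqrt{1-e},0)$. These are precisely the two points in the statement, so it suffices to show that $\varphi(s):=F(p(s),s-p(s))$ has no maximizer in the interior of $I$. Since $I$ is compact and $\varphi$ is continuous (indeed rational with no pole on $I$), a global maximizer exists and must then lie at an endpoint.

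Explicitly,
\[
 \varphi(s)=s^r+u^r+r\,u\,p^{r-1}+r\,(s-p)\,u^{r-1},\qquad u=1-s,\ p=p(s).
\]
The first attempt is to show that $\varphi$ is strictly convex on the interior of $I$, mirroring the approach of Lemma~\ref{lem:maximizer4}. If global convexity is too messy, I will prove the weaker but sufficient statement that at every interior critical point $\varphi''>0$. This rules out interior local maxima, because a maximizer at an interior point would force $\varphi'=0$ and $\varphi''\le0$.

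For the computation it is convenient to change variables to $(p,u)$. Note that $u\,dp/ds$ has a simple form, since differentiating $s^2+2pu=e$ gives $2s+2p'u-2p=0$, i.e. $p'u=p-s=-q$. In particular $p'=-q/u<0$ in the interior, so $p$ is a monotone coordinate on the arc and one may equally parametrize by $p$. Using $p'u=-q$, the derivative should factor into a manageable expression in the $(r-1)$st and $(r-2)$nd powers of $p$, $u$ and $s$, roughly
\[
 \varphi'(s)=r\bigl[s^{r-1}-u^{r-1}-p^{r-1}-(r-1)q\,p^{r-2}+\dots\bigr].
\]
I will verify the exact form and its sign pattern by symbolic computation, as the paper does for Lemma~\ref{lem:calc_four_block}. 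The routine differentiations would be placed in Section~\ref{aicomp} alongside that lemma.

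The main obstacle is the sign analysis of $\varphi''$ (or of $\varphi''$ at zeros of $\varphi'$) uniformly in $r\ge3$ and $e\in(0,1)$. My first attempt will be to use $\varphi'=0$ to eliminate the highest power term, then bound the remaining expression from below. For this I intend to use the elementary inequalities $s^{k}\ge p^{k}+k q p^{k-1}$ (by convexity of $t\mapsto t^k$, since $s=p+q$) and $u\le 1-p$. If that fails, the fallback is to show directly that $\varphi'$ changes sign at most once on $I$, from negative to positive, for instance by showing $\varphi'(s)/u^{r-2}$ is increasing in $s$. This is enough to conclude that $\varphi$ is quasi-convex, so its maximum over $I$ is attained only at the endpoints.
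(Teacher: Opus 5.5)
Your reduction is exactly the paper's. The paper parametrizes the arc by $y=1-p-q$ (your $u$) and gets the same interval and the same two endpoints. It then proves in Lemma~\ref{lem:calc_three_block} precisely your fallback claim, that the second derivative is positive at every interior critical point. What your proposal leaves open is that sign argument, which is the only nontrivial step, and the routes you suggest for it do not work. Carrying out your differentiation with $p'u=-q$ gives the following (the $u^{r-1}$ terms in your sketch cancel):
\[
 \varphi'(s)=r\bigl[s^{r-1}-p^{r-1}-(r-1)qp^{r-2}-(r-2)qu^{r-2}\bigr],
\]
\[
 \varphi''(s)=r\Bigl[(r-1)\bigl(s^{r-2}-p^{r-2}\bigr)+(r-1)(r-2)\frac{p^{r-3}q^2}{u}+(r-2)(r-3)qu^{r-3}-(r-2)u^{r-2}\Bigr].
\]
At the endpoint $s=1-\sqrt{1-e}$, where $q=0$ and $p=s$, this yields $\varphi'=0$ and $\varphi''=-r(r-2)u^{r-2}<0$. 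So $\varphi$ is strictly concave near that endpoint for every $r\ge3$ and $e\in(0,1)$, and global convexity is false. There one also has $(\varphi'/u^{r-2})'=-r(r-2)<0$, so the monotone-quotient fallback fails too. Finally, the tangent inequality $s^k\ge p^k+kqp^{k-1}$ bounds $s^{r-1}-p^{r-1}$ from below, whereas eliminating the negative term $u^{r-2}$ requires an upper bound.

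The step closes as follows. At an interior critical point ($p,q,u>0$), $\varphi'=0$ gives $(r-2)u^{r-2}=(s^{r-1}-p^{r-1})/q-(r-1)p^{r-2}$. Substituting this into $\varphi''$ cancels the $p^{r-2}$ terms. Since $0<p<s$, we have $(s^{r-1}-p^{r-1})/q=\sum_{j=0}^{r-2}s^jp^{r-2-j}\le(r-1)s^{r-2}$; this is the tangent inequality at $s$, not at $p$. Hence
\[
 \frac{\varphi''(s)}{r}=(r-1)s^{r-2}-\sum_{j=0}^{r-2}s^jp^{r-2-j}+(r-1)(r-2)\frac{p^{r-3}q^2}{u}+(r-2)(r-3)qu^{r-3}\ \ge\ (r-1)(r-2)\frac{p^{r-3}q^2}{u}>0.
\]
This is the paper's binomial-coefficient comparison in compact form. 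With it inserted, your argument is complete and coincides with the paper's proof.
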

\begin{proof}
Put $y=1-p-q$.  Since $e<1$, every feasible point has $y>0$ (note that $y=0$ yields $e=1$).
The constraints $e=(1-y)^2+2py$ and $p+q=1-y$ yield the following re-parametrization
\[
 p(y)=\frac{e-(1-y)^2}{2y},\qquad
 q(y)=1-p-y=\frac{2y-2py-2y^2}{2y}= \frac{1-(1-y)^2-2py-y^2}{2y}=\frac{1-e-y^2}{2y}.
\]
The conditions $p,q\ge0$ are equivalent to
\[
 1-\sqrt e\le y\le\sqrt{1-e}.
\]
Thus the feasible curve $(p(y),q(y))$ is a  continuous image of a closed interval and is therefore compact.
Its endpoints are
\[
 (p,q)=(0,\sqrt e)\quad\quad\quad\text{and}\quad\quad\quad
 (p,q)=(1-\sqrt{1-e},0).
\]

The function $F(p,q)$ is continuous, so it attains a maximum on the feasible set.  To locate
that maximum, define
\[
 \Phi(y)=F(p(y),q(y))
 =(1-y)^r+ry\cdot p(y)^{r-1}+rq(y) \cdot y^{r-1}+y^r.
\]
Thus maximizing $(P)$ is equivalent to maximizing $\Phi(y)$ on the interval $[1-\sqrt{e},\sqrt{1-e}]$.
Since $\Phi''(y)>0$ for all $y$ such that $\Phi'(y)=0$ by Lemma~\ref{lem:calc_three_block}, every critical point of $\Phi$ inside the interval is a strict local minimum.
Therefore, a global maximum of $\Phi$ occurs at one of the endpoints.
\end{proof}

\begin{lemma}\label{lem:circle_crossing}
Fix an integer $r\ge3$, and let
\[
 f(t):=t^{r-2}-(r-1)(1-t)^{r-2}\,,
 \qquad
 t_0:=\frac{(r-1)^{1/(r-2)}}{1+(r-1)^{1/(r-2)}}
 \qquad\mbox{and}\qquad 
  D(s):=f\left(\sqrt{1-s^2}\right)+f(s).
\]
Then $D(0)<0$, $D(t_0)>0$ and $D: [0,t_0]\to \mathbb{R}$ has exactly one zero.
\end{lemma}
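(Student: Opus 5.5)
The plan has three parts: compute $D(0)$ directly, get $D(t_0)>0$ from the fact that $t_0$ is the unique root of $f$, and obtain uniqueness of the zero by showing that $D$ is strictly increasing on $[0,t_0]$. Throughout I would use the derivative
\[
 f'(t)=(r-2)\bigl(t^{r-3}+(r-1)(1-t)^{r-3}\bigr)>0,
\]
so $f$ is strictly increasing on $[0,1]$. Solving $t^{r-2}=(r-1)(1-t)^{r-2}$, i.e.\ $t/(1-t)=(r-1)^{1/(r-2)}$, shows that $t_0$ is the unique zero of $f$ in $[0,1]$.

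\emph{Value at $0$.} We have $D(0)=f(1)+f(0)=1-(r-1)=2-r<0$.

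\emph{Value at $t_0$.} Here $D(t_0)=f\bigl(\sqrt{1-t_0^2}\bigr)+0$. Since $f$ is increasing with $f(t_0)=0$, it suffices to show $\sqrt{1-t_0^2}>t_0$, i.e.\ $t_0<1/\sqrt2$.
\begin{itemize}
\item Writing $c_r=(r-1)^{1/(r-2)}$, the condition $t_0<1/\sqrt2$ is equivalent to $c_r<1/(\sqrt2-1)=1+\sqrt2$.
\item We have $\log c_r=\log(r-1)/(r-2)$. This is decreasing in $r$, since $(r-2)/(r-1)<\log(r-1)$ for $r\ge3$.
\item Hence $c_r\le c_3=2<1+\sqrt2$.
\end{itemize}
The same computation gives $s<\sqrt{1-s^2}$ for every $s\in[0,t_0]$, which is used below.

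\emph{Uniqueness of the zero.} Once $D$ is strictly increasing on $[0,t_0]$, the sign change together with continuity gives exactly one zero. Put $u=\sqrt{1-s^2}$, so $du/ds=-s/u$, and
\[
 D'(s)=f'(s)-\frac{s}{u}f'(u).
\]
Positivity of $D'$ is therefore equivalent to
\[
 s\,u^{r-3}+(r-1)\,s(1-u)^{r-3}\;<\;u\,s^{r-3}+(r-1)\,u(1-s)^{r-3}.
\]
\begin{itemize}
\item For $r=3$ the inequality reads $3s<3u$, which holds because $s<u$.
\item For $r\ge4$, the two $(r-1)$-weighted terms compare favourably, since $s<u$ and $1-u<1-s$ give $s(1-u)^{r-3}<u(1-s)^{r-3}$.
\item The remaining terms compare the wrong way: $s u^{r-3}\ge u s^{r-3}$. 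So one must show
\[
 (r-1)\bigl[u(1-s)^{r-3}-s(1-u)^{r-3}\bigr]> su\bigl(u^{r-4}-s^{r-4}\bigr).
\]
\end{itemize}

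This last inequality is the main obstacle. My first attempt is a crude bound.
\begin{itemize}
\item On $[0,t_0]$ we have $1-u\le 1-s$ and $1-s\ge 1-t_0$.
\item This gives $u(1-s)^{r-3}-s(1-u)^{r-3}\ge (u-s)(1-s)^{r-3}$.
\item The mean value theorem gives $u^{r-4}-s^{r-4}\le (r-4)(u-s)$, using $u\le1$.
\end{itemize}
After dividing by $u-s>0$, it then suffices to have $(r-1)(1-t_0)^{r-3}>(r-4)su$, and $su\le 1/2$.

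The quantity $(1-t_0)^{r-3}=(1+c_r)^{-(r-3)}$ tends to $0$ as $r$ grows, so this crude bound might fail for large $r$. If it does, I would sharpen it in one of two ways:
\begin{itemize}
\item Use the exact relation $t_0^{r-2}=(r-1)(1-t_0)^{r-2}$ to compare $(r-1)(1-s)^{r-3}$ directly with $s^{r-4}$-type terms. On $[0,t_0]$ we have $(r-1)(1-s)^{r-2}\ge s^{r-2}$, i.e.\ $f(s)\le 0$, which should absorb the bad term.
\item Alternatively, abandon global monotonicity and show only that $D'>0$ wherever $D=0$. This uses the same identity $f(s)=-f(u)$ to substitute for the power terms, and suffices for uniqueness of the zero.
\end{itemize}
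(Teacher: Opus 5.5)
\textbf{There is a genuine gap in the uniqueness step.} Your handling of $D(0)$ and $D(t_0)$ is fine. Your monotonicity argument for $c_r=(r-1)^{1/(r-2)}$ works as well as the paper's bound $r-1\le2^{r-2}$.

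\textbf{Global monotonicity is false for $r\ge7$.} You set out to prove that $D$ is strictly increasing on $[0,t_0]$, but this fails for every $r\ge7$. So neither the crude bound nor the sharpening via $f(s)\le0$ can succeed. To see this, note that $f(t_0)=0$ gives $(r-1)(1-t_0)^{r-3}=t_0^{r-2}/(1-t_0)$, hence $f'(t_0)=(r-2)t_0^{r-3}/(1-t_0)$. Also $f'(u_0)\ge(r-2)u_0^{r-3}$ for $u_0=\sqrt{1-t_0^2}$. Therefore
\[
 D'(t_0)\le(r-2)\Bigl(\frac{t_0^{r-3}}{1-t_0}-t_0u_0^{r-4}\Bigr),
\]
which is negative as soon as $(t_0/u_0)^{r-4}<1-t_0$. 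For $r=7$ one has $t_0\approx0.5886$, $u_0\approx0.8084$ and $(t_0/u_0)^3\approx0.386<0.411\approx1-t_0$. Since $t_0$ decreases with $r$, the inequality persists for all $r\ge7$. For instance, when $r=10$ one gets $D(0.45)\approx0.33$ but $D(t_0)\approx0.21$. So the ``bad'' term $su(u^{r-4}-s^{r-4})$ really does win near $t_0$; global monotonicity holds only for $r\le6$.

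\textbf{Your second fallback is the right idea but is not carried out.} Proving $D'>0$ only at zeros of $D$ is exactly the paper's strategy. In your proposal it is a single sentence, yet it carries all the difficulty. The missing idea is how to use $D(s)=0$. With $n=r-2$, this condition is equivalent to
\[
 r-1=\frac{s^n+u^n}{(1-s)^n+(1-u)^n}.
\]
Substituting this for the coefficient $r-1$ in $uf'(s)-sf'(u)$ and simplifying with $s^2+u^2=1$ gives
\[
 \frac{uf'(s)-sf'(u)}{n}=\frac{A-B}{(1-s)^n+(1-u)^n},
 \quad A=u\bigl(s(1-s)\bigr)^{n-1}+u^{n-1}(1-s)^n,\quad B=s\bigl(u(1-u)\bigr)^{n-1}+s^{n-1}(1-u)^n.
\]
Then $A>B$ termwise, because $u>s$, $1-s>1-u$, and $s(1-s)>u(1-u)$. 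The last holds since $s+u>s^2+u^2=1$.

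\textbf{You also need a short topological argument.} Continuity and a sign change give only existence of a zero, not uniqueness. You must show that a continuous $D$ with $D(0)<0<D(t_0)$ and $D'>0$ at each zero has only one zero. Take the least zero $z_1$, so $D>0$ just to its right. If there were another zero, the least one $z_2$ beyond that positive stretch would be approached from the left through a region where $D>0$. That contradicts $D'(z_2)>0$.
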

\begin{proof}
We have $D(0)=f(1)+f(0)=1 -(r-1)=2-r<0$.
Lemma~\ref{lem:calc_G_shape} shows that $f$ is strictly increasing on $[0,1]$,
$f(t_0)=0$, and $t_0<1/\sqrt2$.  Therefore
$\sqrt{1-t_0^2}>t_0$, and
\[
 D(t_0)
 =f\left(\sqrt{1-t_0^2}\right)+f(t_0)
 =f\left(\sqrt{1-t_0^2}\right)
 >f(t_0)=0.
\]
Let
\[
 Z=\{u\in[0,t_0]:D(u)=0\}.
\]
Since $D$ is a continuous map on $[0,t_0]$, the opposite signs at the endpoints and the intermediate value theorem show that $Z$ is nonempty.
Moreover, every $u\in Z$ satisfies the assumptions of Lemma~\ref{lem:calc_circle_crossing} and thus $D'(u)>0$.
Finally, the set $D^{-1}(\{0\})$ is a closed subset of $[0,t_0]$, and hence $\inf Z \in Z$. 
Let $u_1=\min Z$.  The derivative at $u_1$ shows that $D$ is positive on
$(u_1,u_1+\delta)$ for some $0<\delta<t_0-u_1$.  If $Z\setminus \{u_1\}\neq\emptyset$, then the
nonempty closed set $Z\cap[u_1+\delta,t_0]$ would have a minimum $u_2$.
There is no zero of $D$ in $(u_1,u_2)$, so the intermediate value theorem yields that $D>0$ throughout this interval.
However, $D'(u_2)>0$ yields that $D$ must be negative slightly to the left of $u_2$; a contradiction.
We conclude that $u_1$ is the unique zero of $D$.
\end{proof}

\begin{lemma}\label{lem:maximizer3_value}
Fix an integer $r\ge3$, and let
\[
 g_r(t):=t^r+(1-t)^r+rt(1-t)^{r-1}.
\]
If $x$ and $y$ are non-negative reals satisfying $x\ge y$ and $x^2+y^2=1$, then $g_r(x)\ge g_r(y)$.
\end{lemma}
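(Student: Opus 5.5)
The plan is to reduce the statement to a one-variable sign analysis along the quarter circle and then apply Lemma~\ref{lem:circle_crossing}. Parametrize the pair by its smaller coordinate: put $y=s$ and $x=\sqrt{1-s^2}$. The condition $x\ge y$ becomes $s\in[0,1/\sqrt2]$. Define
\[
 H(s):=g_r\left(\sqrt{1-s^2}\right)-g_r(s),\qquad s\in[0,1/\sqrt2].
\]
The goal is then $H\ge0$ on this interval. At the endpoints we have $H(0)=g_r(1)-g_r(0)=1-1=0$ and $H(1/\sqrt2)=0$. It therefore suffices to show that $H$ first increases and then decreases.

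The key computation is the derivative of $g_r$. Differentiating,
\[
 g_r'(t)=rt^{r-1}-r(1-t)^{r-1}+r(1-t)^{r-1}-r(r-1)t(1-t)^{r-2}=rt\,f(t),
\]
where $f$ is the function of Lemma~\ref{lem:circle_crossing}. Since $\frac{d}{ds}\sqrt{1-s^2}=-s/\sqrt{1-s^2}$, the chain rule gives
\[
 H'(s)=-\frac{s}{x}\,g_r'(x)-g_r'(s)=-rs\,f(x)-rs\,f(s)=-rs\,D(s),
\]
with $D$ exactly as defined in Lemma~\ref{lem:circle_crossing}. So on $(0,1/\sqrt2]$ the sign of $H'$ is the opposite of the sign of $D$.

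Next I determine the sign of $D$ on all of $[0,1/\sqrt2]$. On $[0,t_0]$, Lemma~\ref{lem:circle_crossing} gives $D(0)<0$, $D(t_0)>0$, and a unique zero $u_1$. By continuity, $D<0$ on $[0,u_1)$ and $D>0$ on $(u_1,t_0]$. On $[t_0,1/\sqrt2]$ I use the facts quoted from Lemma~\ref{lem:calc_G_shape}: $f$ is strictly increasing, $f(t_0)=0$, and $t_0<1/\sqrt2$. These give $f(s)\ge0$ there. Also $x=\sqrt{1-s^2}\ge s\ge t_0$, with $x>t_0$ unless $s=x=1/\sqrt2$, in which case $x>t_0$ anyway. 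Hence $f(x)>0$ and $D(s)>0$.

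Consequently $H'>0$ on $(0,u_1)$ and $H'<0$ on $(u_1,1/\sqrt2]$. So $H$ is increasing on $[0,u_1]$ and decreasing on $[u_1,1/\sqrt2]$. Its minimum is therefore attained at an endpoint, where $H=0$. This gives $H\ge0$, that is, $g_r(x)\ge g_r(y)$. The main obstacle, locating the sign change of $D$, has already been isolated in Lemma~\ref{lem:circle_crossing}. The only remaining point is to extend the sign information past $t_0$ up to $1/\sqrt2$, which the monotonicity of $f$ handles.
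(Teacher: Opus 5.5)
Your proof is correct and follows the paper's argument. It uses the same function $H(s)=g_r(\sqrt{1-s^2})-g_r(s)$, the same identity $H'(s)=-rsD(s)$, and Lemma~\ref{lem:circle_crossing} for the single sign change of $D$; the only difference is cosmetic: the paper treats $y\ge t_0$ separately via monotonicity of $g_r$ and uses $H(t_0)>0$ as the right endpoint on $[0,t_0]$, while you extend the sign of $D$ to $[t_0,1/\sqrt2]$ via monotonicity of $f$ and use $H(1/\sqrt2)=0$, which is equally valid.
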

\begin{proof}
Let
\[
 f(t):=t^{r-2}-(r-1)(1-t)^{r-2}.
\]
Lemma~\ref{lem:calc_G_shape} states that $f$ has a unique zero $t_0$ where
$t_0<1/\sqrt2$, and that $g_r(t)$ decreases when $t<t_0$ and increases when
$t>t_0$.  The assumption $x^2+y^2=1$ implies
\[
 0\le y\le\frac1{\sqrt2}\le x\le1.
\]
If $y\ge t_0$, then as both $x,y$  lie in the increasing region of
$g_r$ and $x\ge y$, we have  $g_r(x)\ge g_r(y)$.

It remains to handle  the case $y<t_0$. 
For $s\in[0, t_0]$, set
\[
 H(s)=g_r\left(\sqrt{1-s^2}\right)-g_r(s),\qquad
 D(s)=f\left(\sqrt{1-s^2}\right)+f(s).
\]
Our aim is to prove that $H(s)\ge0$ for $s \in [0,t_0]$.
Using $g_r'(t)=rtf(t)$ from Lemma~\ref{lem:calc_G_shape}, the chain rule
gives
\begin{align*}
 H'(s)
 &=-\frac{s}{\sqrt{1-s^2}}\cdot g_r'\left(\sqrt{1-s^2}\right)-g_r'(s)
 =-rsf\left(\sqrt{1-s^2}\right)-rsf(s)
 =-rsD(s).
\end{align*}
Thus, for all $s\in(0,t_0]$, the derivative $H'(s)$ has the opposite sign from
$D(s)$.  Let $s_0$ be the unique zero of $D$ given by
Lemma~\ref{lem:circle_crossing}.  
The conclusion of Lemma~\ref{lem:circle_crossing} translates to
\[
 H'(s)>0\quad\forall s\in(0,s_0),\quad \mbox{and}\qquad
 H'(s)<0\quad\forall s\in(s_0,t_0).
\]
Therefore, $H$ is strictly increasing on $[0,s_0]$ and strictly
decreasing on $[s_0,t_0]$.
Consequently, the minimum of $H$ on $[0,t_0]$ is attained at one of the  endpoints of the interval.
Clearly,
\(
 H(0)=g_r(1)-g_r(0)=0
\).
On the other hand, we claim that $H(t_0)>0$.
Indeed, since $\sqrt{1-t_0^2}>t_0$ and $g_r(x)$ is strictly increasing for $x\ge t_0$,
it holds that
\[
 H(t_0)=g_r\left(\sqrt{1-t_0^2}\right)-g_r(t_0)>0.
\]
We conclude that $H(s)\ge0$ on $[0,t_0]$, which finishes the proof.
\end{proof}

\section{Routine computations (AI-assisted proofs)}\label{aicomp}

In this section we present the computational parts used in our proofs.

\begin{lemma}\label{lem:calc_four_block}
Fix an integer $r\ge3$ and two positive reals $q$ and $C$.
Let
\[
 h(w):=Cw^{r-2}+w\left(q-\frac Cw\right)^{r-1}-wq^{r-1}.
\]
If $w>C/q$ then $h''(w)>0$.
\end{lemma}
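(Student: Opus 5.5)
The plan is a direct computation of $h''$, splitting $h$ into its three summands and checking that each second derivative is non-negative, with at least one strictly positive. Write
\[
 u(w):=q-\frac Cw,\qquad u'(w)=\frac{C}{w^2},
\]
and note first that the hypothesis $w>C/q$ is exactly the statement that $u(w)>0$. Also $w>C/q>0$, so all powers of $w$ below are positive.

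The third summand $-wq^{r-1}$ is linear in $w$, so it contributes nothing to $h''$. The first summand $Cw^{r-2}$ has second derivative $C(r-2)(r-3)w^{r-4}$. This is $\ge0$ for all $r\ge3$; it vanishes when $r=3$ and is strictly positive for $r\ge4$.

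For the middle summand $w\,u^{r-1}$, differentiating once gives
\[
 u^{r-1}+(r-1)\frac{C}{w}\,u^{r-2}.
\]
Differentiating again, the term $(r-1)u^{r-2}\cdot C/w^2$ from the first piece should cancel against the term $-(r-1)C u^{r-2}/w^2$ coming from differentiating $C/w$ in the second piece. What remains is
\[
 \frac{d^2}{dw^2}\bigl(w\,u^{r-1}\bigr)=(r-1)(r-2)\,\frac{C^2}{w^3}\,u^{r-3}.
\]
This cancellation is the only step needing care, and I would double-check it by hand.

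Summing the three contributions gives
\[
 h''(w)=C(r-2)(r-3)w^{r-4}+(r-1)(r-2)\frac{C^2}{w^3}\Bigl(q-\frac Cw\Bigr)^{r-3}.
\]
Since $C>0$, $w>0$, $u(w)>0$ and $r\ge3$, the second term is strictly positive. When $r=3$ its factor $u^{0}$ equals $1$, so it is $2C^2/w^3>0$. The first term is non-negative. Hence $h''(w)>0$ for every $w>C/q$. There is no real obstacle beyond the bookkeeping in the middle summand.
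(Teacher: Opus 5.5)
Your proposal is correct and follows the paper's proof essentially verbatim. You compute $h''$ directly, using the same cancellation of the $(r-1)Cw^{-2}(q-C/w)^{r-2}$ terms, and arrive at the identical formula $h''(w)=(r-2)(r-3)Cw^{r-4}+(r-1)(r-2)C^2w^{-3}(q-C/w)^{r-3}$. Positivity then follows from $w>0$ and $q-C/w>0$, exactly as in the paper.
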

\begin{proof}
 The product and chain rules give
 \[
  h'(w)=(r-2)Cw^{r-3}
  +\left(q-\frac Cw\right)^{r-1}
  +(r-1)\frac Cw\left(q-\frac Cw\right)^{r-2}
  -q^{r-1}.
 \]
 Differentiating the second term gives
 \[
  (r-1)\frac{C}{w^2}\left(q-\frac Cw\right)^{r-2},
 \]
 whereas differentiating the third term gives
 \[
  -(r-1)\frac{C}{w^2}\left(q-\frac Cw\right)^{r-2}
  +(r-1)(r-2)\frac{C^2}{w^3}
  \left(q-\frac Cw\right)^{r-3}.
 \]
 The first term here cancels the term obtained above; thus,
\[
 h''(w)=(r-2)(r-3)C\cdot w^{r-4}
 +(r-1)(r-2)\cdot \frac{C^2}{w^3}\cdot\left(q-\frac Cw\right)^{r-3}.
\]
The first summand is non-negative for all $r\ge3$, and thus it remains to show that the second summand is strictly positive.
Since
\[
 \frac{w\left(q-\frac Cw\right)}q=w-\frac Cq>0,
\]
we conclude that the term $\left(q-\frac Cw\right)$ is positive, which together with $w>0$, $C>0$ and $r>2$ concludes the proof.
\end{proof}

\begin{lemma}\label{lem:calc_three_block}
Fix an integer $r\ge3$ and a real number $e\in(0,1)$.
For   $y \in \left(1-\sqrt e,\sqrt{1-e}\right)$,
let
\[
 p=p(y):=\frac{e-(1-y)^2}{2y},\qquad
 q=q(y):=\frac{1-e-y^2}{2y},\qquad
 x=x(y):=1-y=p+q,
\]
and
\[
 \Phi(y):=x^r+ryp^{r-1}+rqy^{r-1}+y^r.
\]
If $\Phi'(y)=0$, then $\Phi''(y)>0$.
\end{lemma}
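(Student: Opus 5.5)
The plan is a direct computation organized around the observation that the derivatives of $p$ and $q$ are very simple. From $p=\frac{e-(1-y)^2}{2y}$ and $x=1-y=p+q$ one gets
\[
 p'(y)=\frac{(1-y)-p}{y}=\frac{q}{y},\qquad q'(y)=-1-p'(y)=-1-\frac{q}{y},\qquad x'(y)=-1 .
\]
Note that for $y$ in the open interval $(1-\sqrt e,\sqrt{1-e})$ we have $p>0$ and $q>0$, since these inequalities hold exactly in the interior. I will differentiate $\Phi$ using only these relations.

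\textbf{Step 1 (first derivative).} Differentiating term by term and using $yp'=q$ and $q'y^{r-1}=-y^{r-1}-qy^{r-2}$, several terms should cancel. This gives
\[
 \frac{\Phi'(y)}{r}=A(y):=-x^{r-1}+p^{r-1}+(r-1)p^{r-2}q+(r-2)\,q\,y^{r-2}.
\]

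\textbf{Step 2 (second derivative).} Differentiate $A$ again with the same rules. The term $(r-1)p^{r-2}q'$ contributes $-(r-1)p^{r-2}-(r-1)p^{r-2}q/y$, and the second part cancels the term coming from $(r-1)p^{r-2}\cdot q/y$. I expect
\[
 A'(y)=(r-1)\bigl(x^{r-2}-p^{r-2}\bigr)-(r-2)y^{r-2}+(r-1)(r-2)\frac{p^{r-3}q^2}{y}+(r-2)(r-3)\,q\,y^{r-3}.
\]
The last two summands are nonnegative. The only dangerous term is $-(r-2)y^{r-2}$, and $\Phi''=rA'$ at a critical point, so it suffices to show that $A'>0$ there.

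\textbf{Step 3 (using the critical-point equation; the main step).} At a critical point we have $A=0$. Since $x=p+q$, the binomial theorem turns $A=0$ into
\[
 (r-2)\,q\,y^{r-2}=x^{r-1}-p^{r-1}-(r-1)p^{r-2}q=\sum_{i\ge2}\binom{r-1}{i}p^{r-1-i}q^i .
\]
Multiply $A'$ by $q>0$ and expand:
\[
 q(r-1)\bigl(x^{r-2}-p^{r-2}\bigr)=\sum_{i\ge2}(r-1)\binom{r-2}{i-1}p^{r-1-i}q^i=\sum_{i\ge2} i\binom{r-1}{i}p^{r-1-i}q^i .
\]
Since $i\ge2$, this strictly exceeds $(r-2)qy^{r-2}$, because the $i=2$ term is positive when $p,q>0$. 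Hence $qA'>0$, so $A'>0$ and $\Phi''(y)=rA'(y)>0$.

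I expect the main obstacle to be the bookkeeping in Steps 1 and 2, namely getting the cancellations right so that the negative term appears cleanly as $-(r-2)y^{r-2}$. Once that is in place, the binomial comparison $i\binom{r-1}{i}>\binom{r-1}{i}$ for $i\ge2$ finishes the proof immediately.
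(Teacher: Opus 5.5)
Your proposal is correct and follows essentially the same route as the paper. You use the same derivative relations $p'=q/y$, $q'=-1-q/y=(p-1)/y$, $x'=-1$, obtain the same formulas for $\Phi'$ and $\Phi''$, and finish with the same binomial coefficient comparison via the critical-point equation. The only cosmetic difference is that you multiply $A'$ by $q$, whereas the paper divides the critical-point equation by $q$.
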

\begin{proof}
Inside the  open interval  $\left(1-\sqrt e,\sqrt{1-e}\right)$, we have  $p,q,y>0$, and
\[
 p'=p'(y)=\frac qy,\qquad q'=q'(y)=\frac{p-1}{y},\qquad x'=x'(y)=-1.
\]

 Indeed, rewriting $p$ and $q$ gives
 \[
  p=\frac{e-1}{2y}+1-\frac y2,
  \qquad
  q=\frac{1-e}{2y}-\frac y2.
 \]
 Differentiating then gives
 \[
  p'=-\frac{e-1}{2y^2}-\frac12=\frac qy,
  \qquad
  q' =-\frac{1-e}{2y^2}-\frac12=\frac{p-1}{y}.
 \]
 
 Differentiating the four summands of $\Phi$ separately gives
 \[
  \Phi'(y)=r\left[x^{r-1}x'+p^{r-1}+(r-1)yp^{r-2}p'+q'y^{r-1}+(r-1)qy^{r-2}+y^{r-1}\right].
 \]
 Substituting $x'=-1$, $p'=q/y$, and $q'=(p-1)/y$ produces the first
 expression below
\begin{equation}\label{eq:calc_phi_prime}
\begin{split}
 \Phi'(y)&=
  r\left[ -x^{r-1}+p^{r-1}+(r-1)qp^{r-2} +(p-1)y^{r-2}+(r-1)q\cdot y^{r-2}+ y^{r-1}\right] \\
  &=
 r\left[p^{r-1}+(r-1)qp^{r-2}
 +(r-2)qy^{r-2}-x^{r-1}\right],
 \end{split}
  \end{equation}
and for the second equality we used $p+q-1=-y$.

At a critical point of $\Phi$, i.e., when 
$ \Phi'(y)=0$, substituting $x=p+q$ into~\eqref{eq:calc_phi_prime} gives
\[
 (r-2)qy^{r-2}
 =(p+q)^{r-1}-p^{r-1}-(r-1)qp^{r-2}.
\]
Expanding $(p+q)^{r-1}$ and dividing by $q>0$ yield that when $ \Phi'(y)=0$, then
\begin{equation}\label{eq:calc_critical_condition}
 (r-2)y^{r-2}
 =\sum_{k=2}^{r-1}\binom{r-1}{k}p^{r-1-k}q^{k-1}.
\end{equation}

Now we  compute the second derivative of $\Phi(y)$: 
\begin{align*}
 \Phi''(y)
 &= r(r-1)q\cdot\frac{p^{r-2}}{y} + r(r-1)(p-1)\frac{p^{r-2}}{y} + r(r-1)(r-2)p^{r-3}\cdot\frac{q^2}{y} \\
 &  \qquad + r(r-2)(p-1)y^{r-3} + r(r-2)^2qy^{r-3} + r(r-1)x^{r-2}
\,.
\end{align*}
Using the identity $p + q -1 = -y$ twice in the following reordering
\begin{align*}
 \Phi''(y)
 &= r(r-1)\left(q+p-1\right)\cdot \frac{p^{r-2}}{y} +  r(r-1)(r-2)p^{r-3}\cdot\frac{q^2}{y} \\
 &  \qquad + r(r-2)(p-1+q)y^{r-3} + r(r-2)(r-3)qy^{r-3} + r(r-1)x^{r-2},
\,
\end{align*}
we obtain
\begin{equation}\label{doubleder}
 \Phi''(y)
 =r(r-1)\left(x^{r-2}-p^{r-2}\right)
   +r(r-1)(r-2)p^{r-3}\cdot\frac{q^2}{y}
 +r(r-2)(r-3)qy^{r-3}-r(r-2)y^{r-2}.
\end{equation}

First, note that the second summand of $\Phi''$ is positive and the third summand is non-negative.
Next, using $x = p+q$, the first summand of $\Phi''$ can be rewritten as follows:
\begin{equation}\label{eq:calc_phi_first}
 r(r-1)(x^{r-2}-p^{r-2})
 =r(r-1)\sum_{j=1}^{r-2}\binom{r-2}{j}p^{r-2-j}q^j.
\end{equation}
Finally, at every critical point of $\Phi$, i.e., when 
$ \Phi'(y)=0$,
the last summand of $\Phi''$ can be rewritten using~\eqref{eq:calc_critical_condition} in the following way: 
\begin{equation}\label{eq:calc_phi_last}
 -r(r-2)y^{r-2}
 =-r\sum_{j=1}^{r-2}\binom{r-1}{j+1}p^{r-2-j}q^j.
\end{equation}
We claim that the sum of \eqref{eq:calc_phi_first} and \eqref{eq:calc_phi_last} is strictly positive.
Indeed, using  
\[
 (r-1)\binom{r-2}{j}=(j+1)\binom{r-1}{j+1}
\]
gives that in~\eqref{doubleder} the coefficient in front of $p^{r-2-j}q^j$,  for  $j \in [r-2]$, is equal to 

\begin{align*}
 r(r-1)\binom{r-2}{j}-r\binom{r-1}{j+1}
 &=r\left((j+1)-1\right)\binom{r-1}{j+1}
 =rj\binom{r-1}{j+1}>0.
\end{align*}

Therefore, we conclude that $\Phi''(y)>0$ at every critical point of $\Phi$, i.e., when 
$ \Phi'(y)=0$.
\end{proof}

\begin{lemma}\label{lem:calc_G_shape}
Fix an integer $r\ge3$, and let
\[
 g_r(t)=t^r+(1-t)^r+rt(1-t)^{r-1} \qquad\mbox{and}\qquad
 f(t)=t^{r-2}-(r-1)(1-t)^{r-2}.
\]
It holds that $g_r'(t)=rtf(t)$, $f$ is strictly increasing on $[0,1]$, and
$f$ has the unique zero
\[
 t_0=\frac{(r-1)^{1/(r-2)}}{1+(r-1)^{1/(r-2)}}<\frac1{\sqrt2}.
\]
In particular, $g_r$ is strictly decreasing on $[0,t_0]$ and strictly increasing on
$[t_0,1]$.
\end{lemma}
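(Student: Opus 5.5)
We prove the three assertions of Lemma~\ref{lem:calc_G_shape} in order: the derivative identity, the monotonicity and unique zero of $f$, and the bound on $t_0$. The shape of $g_r$ then follows at once.

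\textbf{Derivative identity.} Differentiate $g_r$ term by term:
\[
 g_r'(t)=rt^{r-1}-r(1-t)^{r-1}+r(1-t)^{r-1}-r(r-1)t(1-t)^{r-2}.
\]
The middle two terms cancel, leaving
\[
 g_r'(t)=rt\bigl(t^{r-2}-(r-1)(1-t)^{r-2}\bigr)=rtf(t).
\]

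\textbf{Monotonicity and the zero of $f$.} For $r\ge3$ the exponent $r-2$ is at least $1$. Hence $t^{r-2}$ is strictly increasing on $[0,1]$, and $-(r-1)(1-t)^{r-2}$ is strictly increasing as well, so $f$ is strictly increasing. (When $r=3$ both terms are linear, and the claim still holds.) The endpoint values are $f(0)=-(r-1)<0$ and $f(1)=1>0$. By continuity and strict monotonicity, $f$ has exactly one zero in $[0,1]$.

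To locate it, solve $f(t)=0$ on $(0,1)$:
\[
 \Bigl(\frac{t}{1-t}\Bigr)^{r-2}=r-1
 \quad\Longleftrightarrow\quad
 \frac{t}{1-t}=(r-1)^{1/(r-2)}=:c.
\]
This gives $t_0=c/(1+c)$.

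\textbf{The bound $t_0<1/\sqrt2$.} The map $c\mapsto c/(1+c)$ is increasing, and $t_0<1/\sqrt2$ is equivalent to
\[
 c<\frac{1/\sqrt2}{1-1/\sqrt2}=\frac{1}{\sqrt2-1}=\sqrt2+1.
\]
Equivalently, we need $r-1<(1+\sqrt2)^{r-2}$.

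\emph{Base case $r=3$:} the inequality reads $2<1+\sqrt2$, which is true.

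\emph{Inductive step:} if $r-1<(1+\sqrt2)^{r-2}$, then
\[
 r<(1+\sqrt2)^{r-2}+1\le(1+\sqrt2)^{r-2}(1+\sqrt2),
\]
using $(1+\sqrt2)^{r-2}\ge1$ for the last step. This establishes the inequality for every $r\ge3$.

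\textbf{Shape of $g_r$.} Since $g_r'(t)=rtf(t)$, the factor $rt$ is positive for $t>0$. Strict monotonicity of $f$ gives $f<0$ on $[0,t_0)$ and $f>0$ on $(t_0,1]$. Hence $g_r'<0$ on $(0,t_0)$ and $g_r'>0$ on $(t_0,1]$. So $g_r$ is strictly decreasing on $[0,t_0]$ and strictly increasing on $[t_0,1]$; including the endpoints is justified by continuity.

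The only step needing any care is the bound on $t_0$, and the induction above settles it.
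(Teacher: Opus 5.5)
Your proof is correct and follows essentially the same route as the paper: the same derivative identity, the same endpoint values for the uniqueness of the zero, the same solution of $(t/(1-t))^{r-2}=r-1$, and the same sign argument for the shape of $g_r$. The only differences are cosmetic: the paper gets strict monotonicity from $f'(t)=(r-2)\bigl[t^{r-3}+(r-1)(1-t)^{r-3}\bigr]>0$ and bounds $t_0$ using $r-1\le 2^{r-2}$, which gives $t_0\le 2/3$, whereas you argue termwise monotonicity and prove the exact threshold inequality $r-1<(1+\sqrt2)^{r-2}$ by induction.
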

\begin{proof}
Direct computation of the derivatives provides
\[
  g_r'(t)=r\cdot t^{r-1} - r\cdot (1-t)^{r-1} +
 r\cdot (1-t)^{r-1}-
 r\cdot t\cdot (r-1) (1-t)^{r-2}= r\cdot t\cdot f(t), \qquad\mbox{and}\]
 \[
 f'(t)=(r-2)\left[t^{r-3}+(r-1)(1-t)^{r-3}\right].
\]
Since $f(0)=1-r <0<f(1)=1$ and $f'(t)>0$ for $t\in [0,1]$, $f$ has exactly one zero in $[0,1]$.
Solving 
\[
 \left(\frac{t}{1-t}\right)^{r-2}=r-1
\]
yields the displayed formula for $t_0$.  To prove the claimed upper bound $1/\sqrt2$, note
that $r-1\le2^{r-2}$ for every $r\ge3$.
Thus $(r-1)^{1/(r-2)}\le2$, and hence
$t_0\le2/3<1/\sqrt2$.

 Finally, because $rt>0$ for $t\in(0,1]$, the identity
 $g_r'(t)=rtf(t)$ shows that $g_r'$ has the same sign as $f$.  Since $f$ is
 strictly increasing and vanishes only at $t_0$, we have
 \[
  g_r'(t)<0\quad\text{when}\ \ 0<t<t_0,
  \qquad\qquad\text{and}\qquad\qquad
  g_r'(t)>0\quad\text{when}\ \ t_0<t\le1,
 \]
 which gives the asserted monotonicity of $g_r$.
\end{proof}

\begin{lemma}\label{lem:calc_circle_crossing}
Fix a positive integer $n$, and let
\[
 f(t)=t^{n}-(n+1)(1-t)^{n},
 \qquad
 D(s)=f\left(\sqrt{1-s^2}\right)+f(s).
\]
For every $u \in (0,1/\sqrt2)$ such that $D(u)=0$, it holds that $D'(u)>0$.
\end{lemma}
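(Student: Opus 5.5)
Throughout, write $v:=\sqrt{1-u^2}$. Since $u<1/\sqrt2$, we have $0<u<1/\sqrt2<v<1$. I would differentiate $D$ directly. Because $\frac{d}{ds}\sqrt{1-s^2}=-s/\sqrt{1-s^2}$,
\[
 D'(u)=f'(u)-\frac uv\,f'(v),\qquad f'(t)=n t^{n-1}+n(n+1)(1-t)^{n-1}.
\]
Multiplying by $v>0$, the claim $D'(u)>0$ is equivalent to $v f'(u)>u f'(v)$, that is,
\[
 n(n+1)\bigl[v(1-u)^{n-1}-u(1-v)^{n-1}\bigr] > n\,uv\bigl(v^{n-2}-u^{n-2}\bigr).
\]
The left bracket is easy: $v>u$ and $1-u>1-v$ give $v(1-u)^{n-1}-u(1-v)^{n-1}\ge (v-u)(1-u)^{n-1}>0$.

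For $n=1$ the right side equals $-n(v-u)<0$, and for $n=2$ it is $0$. So these cases are immediate without using $D(u)=0$. For $n\ge3$, I would bound the right side by the mean value theorem (or $1-x^{n-2}\le (n-2)(1-x)$ with $x=u/v$):
\[
 uv\bigl(v^{n-2}-u^{n-2}\bigr)\le (n-2)\,u\,v^{n-2}(v-u).
\]
Cancelling the common positive factor $n(v-u)$, it then suffices to show
\[
 (n-2)\,u\,v^{n-2}<(n+1)(1-u)^{n-1}. \tag{$\ast$}
\]

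This is where the hypothesis $D(u)=0$ enters. By Lemma~\ref{lem:calc_G_shape} (with $r=n+2$), $f$ is strictly increasing. Since $f(v)=-f(u)$ and $u<v$, this forces $f(u)<0<f(v)$. Hence $u^n<(n+1)(1-u)^n$, i.e.\ $u<t_0$, and
\[
 v^n=(n+1)(1-v)^n-f(u)<(n+1)\bigl[(1-u)^n+(1-v)^n\bigr]\le 2(n+1)(1-u)^n .
\]
These two bounds express both $u$ and $v^{n-2}$ in terms of powers of $(1-u)$. Substituting them into $(\ast)$ should reduce it to a one-variable inequality in $1-u$ and $n$.

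The main obstacle is that $(\ast)$ with these crude bounds may fail for large $n$: the factor $(2(n+1))^{(n-2)/n}$ is roughly $2n$, against only one spare factor $(1-u)$. If so, I would first try not discarding information. Instead of the lower bound $(v-u)(1-u)^{n-1}$, I would keep $v(1-u)^{n-1}-u(1-v)^{n-1}$ as is. I would also use the exact relation $v^n+u^n=(n+1)[(1-u)^n+(1-v)^n]$ to substitute for $v^{n-1}$ in $uv^{n-1}$, so that the $(n+1)(1-u)^{n}$ terms can be compared termwise.

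Failing that, I would split by the size of $u$. For $u$ small, $(\ast)$ is clear because its left side is $O(u)$. For $u$ near $t_0$, the relation $u<t_0<1/\sqrt2$ pins down $1-u\ge 1-t_0$, and $v=\sqrt{1-u^2}$ is bounded away from $1$, which gives the needed slack in $v^{n-2}$.
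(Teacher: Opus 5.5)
\textbf{There is a genuine gap: the sufficient condition $(\ast)$ is false at the actual zero of $D$.}

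Your steps up to $(\ast)$ are correct, and the cases $n=1,2$ are handled properly. However, $(\ast)$ itself fails at the zero of $D$ for all but small $n$, so it cannot be derived from $D(u)=0$ however carefully you argue.

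\begin{itemize}
\item For $n=7$ the zero is at $u\approx 0.2886$. There $(n-2)uv^{n-2}\approx 1.16$, but $(n+1)(1-u)^{n-1}\approx 1.04$.
\item For $n=10$ the zero is at $u\approx 0.2353$ and $v\approx 0.9719$. There the two sides are about $1.50$ and $0.98$.
\item For large $n$ the zero sits at $u\sim \ln n/n$. There $(n+1)(1-u)^n\approx v^n\approx 1$, so the right side of $(\ast)$ tends to $1$, while the left side is of order $\ln n$.
\end{itemize}

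The loss comes from the mean value step. The bound $v^{n-2}-u^{n-2}\le (n-2)v^{n-3}(v-u)$ overshoots by a factor of roughly $(n-2)(1-u/v)$ when $u/v$ is small, which is exactly where the zero lies. So the problem is not only that your consequences $u<t_0$ and $v^n<2(n+1)(1-u)^n$ are crude. No sharpening of them, and no case split on the size of $u$, can prove a false inequality. Your claim that $(\ast)$ is clear for small $u$ also overlooks the factor $n-2$: at the zero, $(n-2)u$ is of order $\ln n$.

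\textbf{What is missing.} You need to use $D(u)=0$ exactly, before estimating anything. This is essentially your first fallback, applied to the unestimated inequality rather than to $(\ast)$. Multiply $vf'(u)-uf'(v)$ by $v/n$, substitute $uv^n=(n+1)u\bigl[(1-u)^n+(1-v)^n\bigr]-u^{n+1}$, and use $u^2+v^2=1$ (so $v^2-u(1-u)=1-u$). This gives
\[
\frac{v^2}{n}D'(u)=u^{n-1}+(n+1)\bigl[(1-u)^n-u(1-v)^{n-1}\bigr],
\]
which is visibly positive when $u\le 1/2$.

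The paper uses the relation symmetrically, $n+1=(u^n+v^n)/\bigl((1-u)^n+(1-v)^n\bigr)$, to write
\[
\frac{v}{n}D'(u)=\frac{A-B}{(1-u)^n+(1-v)^n},
\]
with $A=v\bigl(u(1-u)\bigr)^{n-1}+v^{n-1}(1-u)^n$ and $B=u\bigl(v(1-v)\bigr)^{n-1}+u^{n-1}(1-v)^n$. Then $A>B$ termwise, from $v>u$, $1-u>1-v$, and $u(1-u)-v(1-v)=(v-u)(u+v-1)>0$. This covers the whole range $(0,1/\sqrt2)$.
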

\begin{proof}
To simplify the presentation of the proof, we introduce some auxiliary notation.
Let $v:=\sqrt{1-u^2}$, and note that $0<u<v<1$.
Also, let
\[
 A:=v\cdot\big(u(1-u)\big)^{n-1}+v^{n-1}\cdot(1-u)^n \qquad \qquad\mbox{and}\qquad\qquad
 B:=u\cdot\big(v(1-v)\big)^{n-1}+u^{n-1}\cdot(1-v)^n,
\]
which will be used to simplify the expression $v\cdot f'(u) - u\cdot f'(v)$.

\begin{claim}
 $A>B$.
\end{claim}
\begin{proof}
Indeed, since $v>u$ and $1-u>1-v$, it is enough to observe that $u(1-u) > v(1-v)$.
 The relation $u^2+v^2=1$, with $0< u< v<1 $ and $u+v\ge 1$ gives that $|u-0.5|< |v-0.5|$, which implies  $u(1-u) > v(1-v)$.
\end{proof}

Next,
{since
\[
 \frac{d}{ds}\sqrt{1-s^2}=-\frac{s}{\sqrt{1-s^2}},
\]
the chain rule gives
\[
D'(s)=f'(s)-\frac{s}{\sqrt{1-s^2}} \cdot
 f'\left(\sqrt{1-s^2}\right).
\]
Evaluating at $s=u$ and using $\sqrt{1-u^2}=v$ yields $D'(u)=f'(u)-\frac uv f'(v)$.
}
Therefore, in our quest to prove $D'(u)>0$ under the assumption $D(u)=f(u)+f(v)=0$,
it suffices to establish the following identity:
\begin{equation}\label{eq:calc_circle_crossing_goal}
\frac{v}{n}\cdot D'(u)= \frac{v\cdot f'(u)-u\cdot f'(v)}n = \frac{A-B}{(1-u)^n+(1-v)^n} >0,
\end{equation}
where  the inequality holds due to the fact $A>B$.

Observe that evaluating $f(t)$ for $t=u$ and $t=v$ and rearranging yields that
\[
 f(u)+f(v)=0 \quad \iff \quad n+1=\frac{u^n+v^n}{(1-u)^n+(1-v)^n}.
\]
Moreover, a direct computation of the derivative of $f$ shows that
\[
f'(t) = n\cdot \left(t^{n-1}+(n+1)(1-t)^{n-1}\right).
\]
Therefore, a tedious yet straightforward calculation reveals that
\begin{align*}
    \frac{v\cdot f'(u)-u\cdot f'(v)}n
    & =v \left(u^{n-1}+(n+1)(1-u)^{n-1}\right) - u \left(v^{n-1}+(n+1)(1-v)^{n-1}\right)\\
    & =vu^{n-1} - uv^{n-1}+(n+1)\left(v(1-u)^{n-1} - u(1-v)^{n-1}\right)\\
    & =\frac{\left(vu^{n-1} - uv^{n-1}\right) \cdot \big({(1-u)^n+(1-v)^n}\big) + \left(u^n+v^n\right)\cdot \big(v(1-u)^{n-1} - u(1-v)^{n-1}\big)}{(1-u)^n+(1-v)^n}
    \,.
\end{align*}
Therefore, our goal of establishing \eqref{eq:calc_circle_crossing_goal} reduces to showing the following two identities for $A$ and $B$:
\begin{align*}
A&=\left(vu^{n-1} - uv^{n-1}\right)\cdot (1-u)^n + \left(u^n+v^n\right)\cdot v(1-u)^{n-1} \qquad \mbox{and}\\
-B&=\left(vu^{n-1} - uv^{n-1}\right) \cdot {(1-v)^n} - \left(u^n+v^n\right) \cdot u(1-v)^{n-1}.
\end{align*}

Let us first verify the identity for $A$.
Plugging in the definition of $A=v\cdot\big(u(1-u)\big)^{n-1}+v^{n-1}\cdot(1-u)^n$,
and then dividing both sides of the identity in question by the factor $v(1-u)^{n-1}$,
brings us to check the following (equivalent) identity:
\begin{equation}\label{eq:calc_circle_crossing_A}
u^{n-1} + (1-u)v^{n-2}
= \left(u^{n-1} - uv^{n-2}\right)\cdot(1-u) + u^n+v^{n}
= u^{n-1}-uv^{n-2}+\left(u^2+v^2\right)\cdot v^{n-2}.
\end{equation}
However, \eqref{eq:calc_circle_crossing_A} readily follows from  $u^2+v^2=1$.

Similarly, we verify the identity for $-B$.
Recalling that $B=u\cdot\big(v(1-v)\big)^{n-1}+u^{n-1}\cdot(1-v)^n$, and then dividing
both sides by the factor $-u(1-v)^{n-1}$, brings us to the identity
\begin{equation}\label{eq:calc_circle_crossing_B}
v^{n-1} + (1-v)u^{n-2}=u^{n}+v^n + \left(v^{n-1}-vu^{n-2} \right)\cdot{(1-v)}
= v^{n-1}-vu^{n-2}+\left(u^2+v^2\right)\cdot u^{n-2}
\,.
\end{equation}
As in the case of \eqref{eq:calc_circle_crossing_A}, the identity \eqref{eq:calc_circle_crossing_B}  now follows from $u^2+v^2=1$, which finishes the proof.
\end{proof}

{\bf Acknowledgments}
The authors are grateful to the American Institute of Mathematics for hosting  the workshop
``Flag algebras and extremal combinatorics'', where the project was started. Additionally, the authors are grateful to the organizers of the workshop ``Flags in Mountains'', where this work was continued. 
The work of the third and the sixth authors was also partially supported by the MEYS of the Czech Republic under the INTER-EXCELLENCE II program (project No. LUAUS26294).

\end{document}